\newtheorem{theorem}{Theorem}[section]
\newtheorem{lemma}{Lemma}[section]
\newtheorem{remark}{Remark}[section]
\numberwithin{equation}{section}
\newtheorem{assumption}{\bf Assumption}
\title{Modeling oncolytic virus therapy with distributed delay and non-local diffusion }
\author{Zizi Wang\footnote{Email: zzwang@wzu.edu.cn, gdwangzizi@163.com.}\\
School of Mathematics and Physics, Wenzhou University,\\ Wenzhou, 325035, China}
\date{}
\begin{document}
\maketitle

\begin{abstract}
In the field of modeling the dynamics of oncolytic viruses, researchers often face the challenge of using specialized mathematical terms to explain uncertain biological phenomena. This paper introduces a basic framework for an oncolytic virus dynamics model with a general growth rate $\mathcal{F}$ and a general nonlinear incidence term $\mathcal{G}$. The construction and derivation of the model explain in detail the generation process and practical significance of the distributed time delays and non-local infection terms. The paper provides the existence and uniqueness of solutions to the model, as well as the existence of a global attractor. Furthermore, through two auxiliary linear partial differential equations, the threshold parameters $\sigma_1$ are determined for sustained tumor growth and $\lambda_1$ for successful viral invasion of tumor cells to analyze the global dynamic behavior of the model. Finally, we illustrate and analyze our abstract theoretical results through a specific example.
\end{abstract}
\textbf{Keywords:}  Oncolytic virotherapy; Delay differential equations; Structure models; Persistent Theorem; Stability.\\
\textbf{MSC2020:}  35B40; 34K20; 92B05;
    
\section{Introduction}

Oncolytic virus (OV) therapy is a promising approach for treating cancer, as it selectively targets and destroys cancer cells while preserving healthy cells from harm \cite{MA2023122}. Specifically, upon encountering cancer cells, oncolytic viruses enter these cells and commence replication. As viruses multiply, their numbers increase rapidly within cancer cells, ultimately leading to cellular rupture and death. Moreover, this process can stimulate the immune system to recognize cancer cells, triggering immune cells to attack and eliminate both infected and uninfected cancer cells \cite{RN234}. Nonetheless, as OV therapy is still an evolving cancer treatment, a comprehensive understanding of the underlying biology and pharmacology is essential to analyze the interactions between the growing tumor, replicating virus, and potential immune responses \cite{RN122, RN269}.

To gain a comprehensive understanding of the mechanism behind oncolytic virotherapy, it is imperative to establish a rational mathematical model\cite{RN270}. During the past decade, several valuable mathematical models have been proposed that provide crucial information on oncolytic virotherapy. These typical mathematical models compartmentalize oncolytic virotherapy into uninfected tumor cells ($U$), infected tumor cells ($I$), free virus particles ($V$), virus-specific immune response ($VS$), and tumor-specific immune response ($TS$), among others. Wodarz \cite{RN122} comprehensively considered U-I-TS-VS, constructing four ordinary differential equation models. Through quantitative analysis of the equilibria, insights on the favorable effects of low viral lethality and high replication rates on tumor treatment were presented, as well as relevant immunological mechanisms. Dingli et al. \cite{RN49} and Bajzer et al. \cite{RN86} proposed the U-I-V model for the measles oncolytic virus, and using numerical modeling techniques, investigated the validity of the model and provided interpretations and predictions for experimental data. Furthermore, Wang et al. \cite{RN589}  and Wang et al. \cite{RN27}, respectively, introduced models with time delays for U-I and U-I-V based on the viral replication cycle. Wang et al. \cite{wang2019a}  explored a dynamics model with time delays for U-I-VS and verified that virus-immune suppressive drugs can effectively enhance tumor treatment. In addition, Li and Xiao \cite{RN271} introduced and analyzed the U-I-TS oncolytic virus dynamics model, pointing out the significant enhancement of treatment efficacy by tumor-specific immunity.In addition, Ding et al. \cite{RN273} derived an oncolytic virus dynamic model with nonlocal time delay based on an age-structured model. The numerical model demonstrated the significant role of the time delay term in explaining experimental data.

 The aforementioned models presuppose a homogeneous mixing of cells and viruses within the tumor, disregarding the presence of spatial structure. In reality, tumors often exhibit complex spatial arrangements, which can significantly impact the dynamics of virus diffusion \cite{zhao2019spatial,RN101,A2020Analysis}. Zhao and Tian \cite{zhao2019spatial} studied a delayed reaction-diffusion model for U-I-V, incorporating virus diffusivity, tumor cell diffusion, and the viral lytic cycle. Based on the ODE model proposed by Wang et al. \cite{RN11}, Elaiw et al. \cite{RN101,A2020Analysis} further investigated the influence of spatial heterogeneity on tumor treatment. They developed a tumor immunity reaction-diffusion model \cite{RN101} and a delayed reaction-diffusion system with a virus circulation period \cite{A2020Analysis}.

However, the predictability of oncolytic virus therapy through mathematical modeling is often constrained by specific mathematical terms, which significantly influence the global dynamics of the model \cite{RN270}. To overcome the dependence of the model on specific mathematical terms, Natalia and Wodarz proposed a general framework of ordinary differential equations for oncolytic viruses \cite{RN87}. The model is given as follows:
\begin{eqnarray*}
 \underbrace{ U'(t)}_{uninfected\ tumor\ cells} &=& \underbrace{U \mathcal{F}(U+I)}_{tumor\ proliferation}-\underbrace{\beta I \mathcal{G}(U,V)}_{virus\ infection}, \\
  \underbrace{I'(t)}_{infected\ tumor\ cells } &=& \underbrace{\beta V \mathcal{G}(U,I)}_{virus\ infection}-\underbrace{\alpha I(t)}_{death}.
\end{eqnarray*}

On this basis, Wang et al. \cite{Wang2022AMM} derived a general oncolytic virus therapy model with non-local delay term incorporating an age-structured model. The model is as follows:
\begin{equation}
    \begin{cases}
U'(t)&=U \mathcal{F}\Big( U+\frac{1}{\kappa} \int_{t-\tau}^t \beta V(\theta)\mathcal{G}(U(\theta))e^{-\alpha(t-\theta)}d\theta \Big)-\beta V \mathcal{G}(U),\\
V'(t)&=\beta \mathcal{G}\Big( U(t-\tau) \Big) V(t-\tau)e^{-\alpha\tau}-\alpha V(t).
    \end{cases}\label{2023In1}
\end{equation}
Where $\beta$  is the infection rate, $\alpha$ is the death rate, $\tau$ represents the duration of the viral replication cycle, $\kappa$ is the average number of viruses on every infected tumor cell, $\mathcal{F}$ denotes the tumor growth function, and $\mathcal{G}$ represents the viral infection function.

This paper investigates the global dynamical behavior of the system \eqref{2023In1} under spatially heterogeneous conditions. For $t>0$ and $x\in\Omega$,  where $\Omega$ represents a spatially bounded domain, we consider a system governed by the following equations: 
\begin{equation}
\begin{cases}
  \begin{aligned}
    \partial_t U(t,x) =&  d_1\Delta U(t,x)- \mathcal{G}\Big(x,U(t,x), V(t,x)\Big)\\
&+U(t,x)\mathcal{F}\Big(x,U(t,x)+I(t,x)\Big),\\
\partial_t V(t,x)= & d_2 \Delta V(t,x)-\alpha(x) V(t,x)\\
&+\int_{\Omega}\Gamma(x,y,\tau) \mathcal{G}\Big (y,U(t-\tau,y),V(t-\tau,y)\Big)dy,\\
I(t,x)=& \frac{1}{\kappa}\int_0^{\tau} \int_{\Omega}\Gamma(x,y,a) \mathcal{G}\Big(y,U(t-a,y),V(t-a,y)\Big)dy da,\\
\end{aligned}\label{2020c-eq8}
\end{cases}
\end{equation}
with the following Robin boundary conditions:
\begin{equation}
\frac{\partial U}{\partial \overrightarrow{n}}+\eta_1(x)U =0,\quad
\frac{\partial V}{\partial \overrightarrow{n}}+\eta_2(x)V=0,\quad
\forall x\in\partial\Omega,\ t>0, \label{2020c-maineq2}  
\end{equation}
and the initial conditions are given as 
\begin{equation}
U(\theta,x) = U_0(\theta,x), V(\theta,x) = V_0(\theta,x), \quad \forall x\in\overline{\Omega}, \theta \in [-\tau,0].\label{2020c-maineq3}  
\end{equation}

In this context, $d_1$ and $d_2$ represent the diffusion rates of cells and viruses, respectively. The parameter $\kappa$ signifies the average number of viruses inside infected tumor cells. The function $\alpha(x)$ denotes the virus death rate. All of $d_1$, $d_2$, and $\kappa$ are positive constants. Additionally, $\alpha(x)$ is positive Holder continuous function over the closure of $\Omega$. The function $\Gamma$ refers to the Green function or the fundamental solution of the operator $d_2\Delta - \alpha(x)$ under the appropriate boundary conditions. Moreover, $\frac{\partial}{\partial \overrightarrow{n}}$ denotes the outward normal derivative on $\partial\Omega$, and $\eta_1(x), \eta_2(x) \in \mathbb{C}^{1+\alpha}(\partial \Omega,\mathbb{R}_+)$ with $\eta_1(x),\ \eta_2(x)>0$, and $U_0(\theta,x),\ V_0(\theta,x) \in \mathbb{C}([-\tau,0]\times \Omega,\mathbb{R}_+)$.

Furthermore, the functions $\mathcal{F}$ and $\mathcal{G}$ represent the cell growth rate and the nonlinear incidence term, respectively, satisfying Assumption\ref{2020c-a1}.

\begin{assumption} $\mathcal{F}$ and $\mathcal{G}$ are continuous-differentiable functions and satisfy that
\begin{itemize}
\item[(1)] There exists $K_0>0$ such that $\mathcal{F}\left( x,K_0\right) <0$ and $\mathcal{F}(x,0)>0$ for all $x\in\overline{\Omega}$.
\item[(2)] For any $x\in\overline{\Omega}$, $\mathcal{F}(x,\cdot)$ is decreasing on $\mathbb{R}^+$, and for some $x_0\in\Omega$, $\mathcal{F}(x_0,\cdot)$ is strictly decreasing on $\mathbb{R}^+$;
\item[(3)] $\mathcal{G}(\cdot,0,\cdot)=\mathcal{G}(\cdot,\cdot,0)=0$;
\item[(4)] $\partial_{u_1} \mathcal{G}(x,u_1,u_2)>0$, $\partial_{u_2}  \mathcal{G}(x,u_1,u_2)>0$ for all $x\in \overline{\Omega}$, $u_1,\ u_2\geq0$.
\item[(5)] $\mathcal{G}(x,u_1,u_2) \leq \partial_{u_2} \mathcal{G}(x,u_1,0) u_2$ for all $x\in\overline{\Omega},\ u_1,\ u_2>0$;
\item[(6)] $\frac{\mathcal{G}(x,u_1,u_2)}{u_1}$ is bounded, and $\partial_{u_1}\frac{\mathcal{G}(x,u_1,u_2)}{u_1}\geq0$ for all $x\in\overline{\Omega},\ u_1,u_2>0$.
\end{itemize}\label{2020c-a1}
\end{assumption}

\begin{remark}
Assumption \ref{2020c-a1} possesses universal significance, as it is satisfied by many common growth rate functions (see Table \ref{2023a.Table1}) and nonlinear infection functions (see Table \ref{2023a.Table2}).
\begin{table}[htbp]
\caption{Some examples of $\mathcal{F}(\cdot,U)$}
\centering
\begin{tabular}{cll}
\hline
$F(\cdot,U)$   & Type           &  {REF.}   \\\hline
$b-d U$     &Logistic growth &  \cite{RN25,RN87} \\
$b-d\ln(U)$     & Gomperzian Growth  &  \cite{dingli2009dynamics,Kuang2016}  \\
  \hline
\end{tabular}
\label{2023a.Table1}
\\
\footnotesize{Where $b,\ d$ are non-negative constants.}
\end{table}

\begin{table}[htbp]
\caption{Some examples of $\mathcal{G}(\cdot,U,V)$}
\centering
\begin{tabular}{cll}
\hline
 $\mathcal{G}(\cdot,U,V)$   & Type           &  {REF.}   \\\hline
$\beta U V$  & Mass action   & \cite{A2020Analysis,wang2019a} \\
$\frac{\beta UV}{1+h V}$     & Holling type II &  \cite{RN108,RN177} \\\hline
\end{tabular}
\\
\footnotesize{Where $\beta, \ h$ is non-negative constant.}
\label{2023a.Table2}
\end{table}
\end{remark}

\begin{remark}
Model \eqref{2020c-eq8}-\eqref{2020c-maineq3} is very versatile as it encompasses various biological scenarios. For example, it can represent a predator-prey model \cite{gourley2004stage, RN111} or an infectious disease model \cite{RN232} by appropriately selecting the parameters $\mathcal{F}$, $\mathcal{G}$, and $\kappa$. Specifically, our model is equivalent to the one proposed in \cite{gourley2004stage} when spatial factors $x$ are ignored and $\kappa$ approaches infinity. In this case, $d_1$ , $d_2$ are both zero,  $\alpha(x)$ is constant and $\Gamma(x,y,\tau)$ is given by $e^{-d\tau}$. Furthermore, if $\kappa$ is set to infinity and $\mathcal{F}(x,U)=\frac{\mu(x)}{U(x)}-d(x)$, $\mathcal{G}(x,U,V)=\beta(x)U(x)V(x)$, and $\eta_1(x)=\eta_2(x)=0$, the model will transform into an infectious disease model \cite{RN232}. Lastly, when $\kappa$ is set to infinity, the model corresponds to the predator-prey model proposed in \cite{RN111}.
\end{remark}

The remainder of this paper is organized as follows. Section \ref{2020cModel} provides a detailed derivation of the model \eqref{2020c-eq8}. In Section \ref{2023section3}, we establish the existence and uniqueness of solutions to the model, as well as the global compact attractor of the solutions. In Section \ref{2023section4}, we present the harsh conditions for global stability of the zero steady state and tumor steady state, as well as the conditions for successful viral invasion of tumor cells and the existence of positive steady state. In Section \ref{2023section5}, we consider specific examples of the model, explain its practical applications, and provide lower bound estimates for tumor cell and viral particle populations after viral invasion. Finally,  in Section \ref{2023section6}, we conclude the paper by summarizing the overall framework and findings.

\section{Model development}\label{2020cModel}

Inspired by Komarova and Wodarz \cite{RN87}, the uninfected tumor cells satisfy:
\begin{equation}
\begin{aligned}
    \underbrace{\partial_t U(t,x)}_{Uninfected\ tumor\ cells}&=\underbrace{d_1\Delta U(t,x)}_{Diffusion}+\underbrace{U(t,x)\mathcal{F} \Big( x,U(t,x)+I(t,x)\Big)}_{Tumor\ proliferation}\\
    &-\underbrace{\mathcal{G}\Big(x,U(t,x),V(t,x)\Big)}_{Virues\ therapy}
\end{aligned},\label{2020c-eq1}
\end{equation}

To depict the intracellular viral life cycle \cite{RN27,RN365}, we introduce the notion of infection age denoted by the variable $a$. Let $P(t,a,x)$ be the density (with respect to infection age $a$) of virues at location $x$ and time $t$. We assume that $P(t,a,x)$ complies with the standard argument on age-structured model with spatial diffusion.
\begin{eqnarray}
&&\partial_t P(t,a,x)+\partial_a P(t,a,x)= d_2\Delta P(t,a,x)-\alpha(x) P(t,a,x),\label{2020c-eq2}\\
&&\frac{\partial V}{\partial \overrightarrow{n}}+\eta_2(x)V=0,\label{2020c-eq4}\\
&&P(t,0,x) = \mathcal{G}\Big(x,U(t,x),V(t,x)\Big).\label{2020c-eq3}
\end{eqnarray}

Using the method of characteristic curve, we solve the equations \eqref{2020c-eq2}-\eqref{2020c-eq3}. For any constant $\xi$, letting $t=a+\xi$, we define $W_{\xi}(a,x):=P(\xi+a,a,x)=P(t,a,x)$. Thus,
\begin{eqnarray*}
\partial_a W_{\xi}(a,x)&=&d_2 \Delta W_{\xi}(a,x)-\alpha(x) W_{\xi}(a,x),\\
W_{\xi}(0,x)&=&\mathcal{G}\Big(x,U(\xi,x), V(\xi,x)\Big).
\end{eqnarray*}
Regarding $\xi$ as a parameter, we solve the above equation and obtain: 
\begin{equation*}
W_{\xi}(a,x)=\int_{\Omega}\Gamma(x,y,a)\mathcal{G}\Big(y,U(\xi,y),V(\xi,y)\Big)dy.
\end{equation*}
Thus,
\begin{equation*}
P(t,a,x)=\int_{\Omega}\Gamma(x,y,a) \mathcal{G}\Big(y,U(t-a,y),V(t-a,y)\Big)dy,
\end{equation*}
where $\Gamma$ is the fundamental solution of the operator $d_2 \Delta-\alpha(x)$ associated with boundary condition $\frac{\partial W_{\xi}}{\partial \overrightarrow{n}}+\eta_2(x)W_{\xi}=0$ where $\eta_2(x)\in C(\overline{\Omega},R^+)$ is Holder continuous and $\frac{\partial W}{\partial \overrightarrow{n}}$ denotes the derivative along the outward normal direction $\overrightarrow{n}$ to $\partial \Omega$ \cite{RN2906}.

We divide the virus into two parts: the virus within tumor cells $E(t,x)$ and free viruses $V(t,x)$.  And $E(t)$ and $V(t)$ satisfy the following equations:
\begin{eqnarray*}
E(t,x)&=&\int_{0}^{\tau}P(t,a,x)da\\
&=&\int_0^{\tau} \int_{\Omega}\Gamma(x,y,a) \mathcal{G}\Big(y,U(t-a,y),V(t-a,y)\Big)dyda,\\
V(t,x)&=&\int_{\tau}^{+\infty}P(t,a,x)da\\
&=&\int_{\tau}^{+\infty} \int_{\Omega}\Gamma(x,y,a) \mathcal{G}\Big(y,U(t-a,y),V(t-a,y)\Big)dyda,
\end{eqnarray*}
where $\tau$ represents the virus replication cycle, in other words, for the infection age $a<\tau$, viruses are all within the infected tumor cells; and after the infection age $a>\tau$, the viruses lyse the infected tumor cells and become free virus particles.

We postulate that there are $\kappa$ virus particles within each infected tumor cell, leading to the relationship:
\begin{equation}
I(t,x)=\frac{E(t,x)}{\kappa}.\label{2020c-eq5}
\end{equation}
Besides, one can rewrite $V(t,x)$ as
\begin{equation}
V(t,x)=\int_{-\infty}^{t-\tau}\bigg(\int_{\Omega}\Gamma (x,y,t-\tau) \mathcal{G}\Big(y,U(s,y),V(s,y)\Big)dy\bigg) ds.\label{2020c-eq6}
\end{equation}
Differentiating \eqref{2020c-eq6} with respect to $t$ and using equation \eqref{2020c-eq2}, we have
\begin{equation}
\partial_t V(t,x)= d_2(x)\Delta V(t,x)-\alpha(x) V(t,x)+\int_{\Omega}\Gamma(x,y,\tau)  \mathcal{G}\Big(y,U(t-\tau,y),V(t-\tau,y)\Big)dy.\label{2020c-eq7}
\end{equation}

Thus, by equation \eqref{2020c-eq1}, equation \eqref{2020c-eq5}, and equation \eqref{2020c-eq7} together with the boundary condition $\frac{\partial}{\partial \overrightarrow{n}}\cdot+\eta_2(x)\cdot=0$, we obtain the full oncolytic virus therapy system \eqref{2020c-eq8}.

\section{Well posed and global compact attractor}\label{2023section3}

Let $\Omega$ be a bounded domain in $\mathbb{R}^n$ and $\mathbb{X}=\mathbb{C}(\overline{\Omega},\mathbb{R}^2_+)$ be the Banach space of continuous functions with values in the real plane, equipped with the norm $\|u\|_X$, which is the supremum norm. Assume that $\Omega$ has a smooth boundary $\partial\Omega$, and let $Y=\mathbb{C}(\overline{\Omega},\mathbb{R})$. Set $\tau\geq0$ and $\mathbb{C}_{\tau}=\mathbb{C}([-\tau,0],\mathbb{X})$ with the norm $\|\phi\| :=\max_{\theta\in[-\tau,0]}\|\phi(\theta)\|_{\mathbb{X}}$. We define $u_t\in \mathbb{C}_{\tau}$ by
$$u_t(\theta)=u(t+\theta),\quad \forall \theta\in[-\tau,0].$$

Let $T_1(t),\ T_2(t):Y\rightarrow Y,\ t\geq 0$, be the semigroups corresponding to $A_1:=d_1\Delta,\ A_2:=d_2\Delta-\alpha(x)$ with the boundary condition $\frac{\partial }{\partial \overrightarrow{n}}+\eta_1(x)$ and $\frac{\partial}{\partial \overrightarrow{n}}+\eta_2(x)$, respectively. In other words, the linear operator $A:=(A_1,A_2)$ with domain $D(A)=D(A_1)\times D(A_2)$ is the infinitesimal generator of  $C_0$ semigroups $T(t):=(T_1(t),T_2(t))$.

Define $\mathcal{L}:=(\mathcal{L}_1,\mathcal{L}_2)$ as a mapping from $\mathbb{C}_{\tau}$ to $\mathbb{C}_{\tau}$ with
\begin{equation*}
    \begin{aligned}
        \mathcal{L}_1(\phi_1,\phi_2)(\cdot)&=-\mathcal{G}\Big (\cdot,\phi_1(0,\cdot),\phi_2(0,\cdot)\Big)
        +\phi_1(0,\cdot) \mathcal{F}\bigg(\cdot,\phi_1(0,\cdot)\\
        &+\frac{1}{\kappa} \int_0^{\tau}\int_{\Omega}\Gamma(\cdot,y,a) \mathcal{G}\Big(y,\phi_1(-a,y),\phi_2(-a,y)\Big)dyda\bigg)\\
        \mathcal{L}_2(\phi_1,\phi_2)(\cdot)&=-\alpha(\cdot)\phi_2(0,\cdot)+\int_{\Omega}\Gamma(\cdot,y,\tau) \mathcal{G}\Big(y,\phi_1(-\tau,\cdot),\phi_2(-\tau,\cdot)\Big)dy 
    \end{aligned}
\end{equation*}
for $\phi=(\phi_1,\phi_2)\in \mathbb{C}_{\tau}$. Assuming that the semi-flow of the solution generated by the system \eqref{2020c-eq8}-\eqref{2020c-maineq3} is $\Phi(t)$, we can recast the system \eqref{2020c-eq8}-\eqref{2020c-maineq3} as a semi-dynamical system $u(t):=\Phi(t) \phi $ where $\phi(\cdot)=(\phi_1(\cdot),\phi_2(\cdot))\in \mathbb{C}_{\tau}$.  And $u(t)$ satisfies the following evolution equation:
\begin{equation}\label{2020c-eq9}
\begin{cases}
\frac{du(t)}{dt}=Au+\mathcal{L}(u_t),\qquad t>0,\\
u(0)= \phi.
\end{cases}
\end{equation}
\begin{theorem}For any function $\phi\in\mathbb{C}_{\tau}$, the system \eqref{2020c-eq8}-\eqref{2020c-maineq3} admits a unique non-continuable solution $u$ defined on $\bar{\Omega} \times[-\tau, t_{\infty})$, where $t_{\infty}=$ $t_{\infty}(\phi)$ with $0<t_\infty \leq \infty$. Furthermore, for $(t, x) \in[-\tau, t_\infty) \times \bar{\Omega}$, $u(t, x, \phi)$ belongs to $\mathbb{R}_{+}^2$.\label{2023T1}
\end{theorem}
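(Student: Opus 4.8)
The plan is to read \eqref{2020c-eq9} as an abstract semilinear functional differential equation on the Banach space $\mathbb{C}\big([-\tau,0],\mathbb{C}(\overline{\Omega},\mathbb{R}^2)\big)$ and apply the classical local theory for such equations (Martin--Smith, Wu, Travis--Webb), then extract positivity by a cone-invariance argument. \emph{Step 1 (linear part).} Since $\alpha$ is positive and H\"older continuous, $A_1=d_1\Delta$ and $A_2=d_2\Delta-\alpha(x)$ under the Robin conditions \eqref{2020c-maineq2} each generate an analytic $C_0$-semigroup on $Y$; the maximum principle makes $T_1(t),T_2(t)$ positive (and $T_2$ exponentially stable), so $T(t)=(T_1(t),T_2(t))$ is a positive analytic $C_0$-semigroup on $\mathbb{X}$. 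The kernel $\Gamma(x,y,a)$, being the fundamental solution of $d_2\Delta-\alpha(x)$ with the same boundary condition, is nonnegative and bounded on $\overline{\Omega}\times\overline{\Omega}\times[\delta,\tau]$ for every $\delta>0$, so that $\phi\mapsto\int_0^{\tau}\!\!\int_{\Omega}\Gamma(\cdot,y,a)(\cdot)\,dy\,da$ and $\phi\mapsto\int_{\Omega}\Gamma(\cdot,y,\tau)(\cdot)\,dy$ are bounded positive linear operators into $Y$.

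\emph{Step 2 (local well-posedness).} I would verify that $\mathcal{L}=(\mathcal{L}_1,\mathcal{L}_2):\mathbb{C}_{\tau}\to\mathbb{X}$ is Lipschitz on bounded sets. The map $\phi\mapsto\mathcal{G}\big(\cdot,\phi_1(\cdot,\cdot),\phi_2(\cdot,\cdot)\big)$ is the Nemytskii operator of the $C^1$ function $\mathcal{G}$, hence Lipschitz on bounded subsets of $\mathbb{C}_{\tau}$; composing it with the bounded linear integral operators of Step~1 and with the bounded linear evaluation maps $\phi\mapsto\phi_i(\theta,\cdot)$ preserves this property, and so does forming products with such terms and composing once more with the $C^1$ function $\mathcal{F}$. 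Thus $\mathcal{L}$ is locally Lipschitz, and the standard theory yields a unique maximal mild solution $u\in\mathbb{C}\big([-\tau,t_\infty),\mathbb{C}(\overline{\Omega},\mathbb{R}^2)\big)$ of \eqref{2020c-eq9} with $u_0=\phi$ and $0<t_\infty\le\infty$; because $T(t)$ is analytic, parabolic smoothing upgrades $u$ to a classical solution of \eqref{2020c-eq8}--\eqref{2020c-maineq3} on $(0,t_\infty)$, and the alternative ``$t_\infty=\infty$ or $\|u_t\|\to\infty$ as $t\to t_\infty$'' identifies $u$ as non-continuable.

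\emph{Step 3 (positive invariance).} It remains to show the cone $\mathbb{C}_{\tau}^{+}:=\{\phi:\phi(\theta,x)\in\mathbb{R}^2_+\text{ for all }\theta,x\}$ is positively invariant. Since $T(t)$ is a positive semigroup, by the invariance criterion for semilinear functional parabolic systems it suffices to check the quasipositivity condition: for every $\psi\in\mathbb{C}_{\tau}^{+}$ and every $x_0\in\overline{\Omega}$, if $\psi_1(0,x_0)=0$ then $\mathcal{L}_1(\psi)(x_0)\ge 0$, and if $\psi_2(0,x_0)=0$ then $\mathcal{L}_2(\psi)(x_0)\ge 0$. For the first, $\psi_1(0,x_0)=0$ kills the proliferation term $\psi_1(0,x_0)\mathcal{F}(\cdots)$, and Assumption~\ref{2020c-a1}(3) gives $\mathcal{G}(x_0,0,\psi_2(0,x_0))=0$, so $\mathcal{L}_1(\psi)(x_0)=0$. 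For the second, $\mathcal{L}_2(\psi)(x_0)=\int_{\Omega}\Gamma(x_0,y,\tau)\,\mathcal{G}\big(y,\psi_1(-\tau,y),\psi_2(-\tau,y)\big)\,dy\ge 0$, since $\Gamma\ge 0$ and, by Assumption~\ref{2020c-a1}(3)--(4), $\mathcal{G}(y,u_1,u_2)\ge\mathcal{G}(y,0,u_2)=0$ for $u_1,u_2\ge 0$. (Replacing $\mathcal{L}$ by $\mathcal{L}+M\,\mathrm{ev}_0$ with $M$ large makes the perturbed nonlinearity quasimonotone on bounded sets, which is the form usually required.) Hence $u(t,x,\phi)\in\mathbb{R}^2_+$ for all $t\in[-\tau,t_\infty)$ and $x\in\overline{\Omega}$.

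\emph{Main obstacle.} Steps~1--2 are routine once the regularity of $\mathcal{F},\mathcal{G}$ and the boundedness/positivity of $\Gamma$ are in hand; the one point needing care is Step~3 for the $V$-equation, where positivity is not visible from a pointwise ODE comparison because of the delay and the nonlocal kernel, and must instead be obtained from the cone-invariance theorem for delayed reaction--diffusion systems — which, as above, reduces precisely to $\Gamma\ge 0$ together with $\mathcal{G}\ge 0$ on $\mathbb{R}^2_+$.
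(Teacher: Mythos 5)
Your proposal is correct and follows essentially the same route as the paper: the paper also invokes the Martin--Smith theory (Corollary 4 of \cite{RN213}), verifying that $\mathcal{L}$ is Lipschitz (from the $C^1$ regularity of $\mathcal{F},\mathcal{G}$ and boundedness of $\Gamma$) together with the subtangential condition on the cone $\mathbb{R}^2_+$, which is exactly the quasipositivity check in your Step~3 (the paper writes it as $\phi(0,\cdot)+h\mathcal{L}(\phi)\ge 0$ for small $h$, using Assumption~\ref{2020c-a1}(6) to control $\mathcal{G}/u_1$, while you check it on the boundary of the cone via Assumption~\ref{2020c-a1}(3)--(4) and $\Gamma\ge 0$ — equivalent formulations of the same tangency condition).
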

\begin{proof}
    By the Corollary 4 in Martin and Smith \cite{RN213}, it suffices to prove that the operator $\mathcal{L}$ satisfies the subtangential condition and the Lipschitz condition in the invariant set $K:= \mathbb{R}^2_+$. 
    
    It can be easily demonstrated that the operator $\mathcal{L}$ is Lipschitz continuous, because the functions $\mathcal{F}$ and $\mathcal{G}$ are continuously differentiable, as well as the bounded properties of the Green function $\Gamma$.
    
    Now let's verify the  subtangential condition.  For every  $\phi \in \mathbb{C}_{\tau}, x \in \Omega$  and  $h \geq 0$, then
\begin{equation}
    \phi(0, \cdot)+h \mathcal{L}(\phi)(\cdot)=\left(\begin{array}{l}
\phi_1(0, x)+h \mathcal{L}_1\left(\phi_1, \phi_2\right) \\
\phi_2(0, x)+h \mathcal{L}_2\left(\phi_1, \phi_2\right)
\end{array}\right)\label{2023eqad1}
\end{equation}
where
\begin{equation*}
    \begin{aligned}
        \phi_1(0, \cdot)+&h \mathcal{L}_1\left(\phi_1, \phi_2\right)=\phi_1(\cdot, 0) \left[1- h \mathcal{G}\Big(\cdot,\phi_1(0,\cdot),\phi_2(0,\cdot)\Big)\phi_1^{-1}(0,\cdot)\right]\\
        &+h \phi_1(\cdot, 0) \mathcal{F} \Big(\cdot, \phi_1(0, \cdot)+\frac{1}{k} \int_0^\tau \int_{\Omega} \Gamma(\cdot, y, \tau) \mathcal{G}(y,\phi_1(-\tau,\cdot),\phi_2(-\tau,\cdot))dyda\Big),\\
        \phi_2(0,\cdot)+&h\mathcal{L}_2\left(\phi_1,\phi_2\right) = \phi_2(0,\cdot)\left[1-\alpha(\cdot) h \right]
        +h  \int_{\Omega}\Gamma(\cdot,y,\tau) \mathcal{G}\Big(y,\phi_1(-\tau,\cdot),\phi_2(-\tau,\cdot)\Big)dy 
    \end{aligned}
\end{equation*}
According to Assumption \ref{2020c-a1} (6), we can observe that the expression \eqref{2023eqad1} is greater than 0 when $h$ is sufficiently small.   
    \end{proof}
To proceed further, we consider the following equation:
\begin{equation}\label{2020c-eq10}
\begin{cases}
\partial_t z(x,t)= d_1\Delta z(x,t)+ z(x,t)\mathcal{F}\Big(x,z\left(x,t\right)\Big),& x\in \Omega\\
\frac{\partial z(t,x)}{\partial \overrightarrow{n}}+\eta_1(x) z(t,x)=0, &\forall x\in\partial\Omega.
\end{cases}
\end{equation}
It is easy to check that $z(x)=0$ is a steady-state of system \eqref{2020c-eq10}. Linearizing the above system at $0$, we get the following elliptic eigenvalue problem:
\begin{equation}\label{2020c-eq11}
\begin{cases}
\sigma \psi=d_1\Delta \psi+\psi\mathcal{F}(x,0), & x\in\Omega,\\
\frac{\partial \psi(x)}{\partial \overrightarrow{n}}+\eta_1 \psi(x)=0, &\forall x\in\partial\Omega.
\end{cases}
\end{equation}
we define $\sigma_1$ as the principal eigenvalue of the above linear equation \eqref{2020c-eq11} and $\xi_1(x)$ is the corresponding strictly positive eigenfunction.

\begin{lemma}(Proposition 3.3 in \cite{FZ1997JDE} or Proposition 3.3.1 and Proposition 3.3.2 in \cite{RN4})
	System \eqref{2020c-eq10} has a unique positive equilibrium $z_*(x)\leq K_0$ where $K_0$ is defined by Assumption \ref{2020c-a1}(1). And for every initial data $\phi \in \mathbb{C}_{\tau}$, we have $\lim _{t \rightarrow \infty} z(x, t, \phi)=z^*(x)$ uniformly for $x \in \bar{\Omega}$ if $\sigma_1>0$; Whereas, all non-negative solutions of \eqref{2020c-eq10} decay exponentially to zero as $\sigma_1\leq0$.\label{2020c-L3.1}
\end{lemma}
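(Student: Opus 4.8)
The plan is to treat \eqref{2020c-eq10} as a scalar reaction--diffusion equation with a monotone (in the density variable) reaction term $z\mathcal{F}(x,z)$ and to invoke the standard theory of the principal eigenvalue for such logistic-type equations, as packaged in \cite{FZ1997JDE,RN4}. First I would record the structural facts that make that theory applicable: by Assumption~\ref{2020c-a1}(1)--(2), the map $z\mapsto z\mathcal{F}(x,z)$ vanishes at $z=0$, is positive for small $z>0$ (since $\mathcal{F}(x,0)>0$), and satisfies $K_0\mathcal{F}(x,K_0)<0$, so $K_0$ is a spatially uniform supersolution; moreover $\mathcal{F}(x,\cdot)$ decreasing gives the sublinearity/concavity-type condition $z\mathcal{F}(x,z)\le z\,\mathcal{F}(x,0)$ and, on $x_0$, the strict monotonicity needed to rule out degeneracy. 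These are exactly the hypotheses under which Proposition~3.3 of \cite{FZ1997JDE} asserts the dichotomy governed by the sign of $\sigma_1$, the principal eigenvalue of the linearization \eqref{2020c-eq11}.

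Next I would run the two cases. If $\sigma_1>0$: existence of a positive equilibrium $z_*$ follows by the sub/supersolution method --- $\epsilon\xi_1(x)$ is a subsolution for $\epsilon$ small (because $\sigma_1>0$ makes the linear part dominate near $0$), and $K_0$ is a supersolution, so a monotone iteration trapped between them converges to an equilibrium $z_*$ with $0<z_*\le K_0$; uniqueness comes from the strict sub-homogeneity of $z\mathcal{F}(x,z)$ (a standard comparison/sliding argument, or Lemma~3.1-type uniqueness in \cite{RN4}). Global attractivity then follows because the scalar equation generates a monotone semiflow: any nonnegative initial datum is squeezed between a small subsolution $\epsilon\xi_1$ and a large constant supersolution $M\ge K_0$, and the corresponding monotone trajectories both converge to the unique equilibrium $z_*$, forcing $z(\cdot,t,\phi)\to z_*$ uniformly on $\bar\Omega$. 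If $\sigma_1\le 0$: using $z\mathcal{F}(x,z)\le z\mathcal{F}(x,0)$, the solution is dominated by the solution $\bar z$ of the linear equation $\partial_t\bar z=d_1\Delta\bar z+\bar z\mathcal{F}(x,0)$, whose long-time behavior is $\bar z(\cdot,t)\sim e^{\sigma_1 t}\langle\text{const}\rangle\xi_1$; hence $z(\cdot,t,\phi)$ decays exponentially to $0$ when $\sigma_1<0$, and a slightly more careful argument (using the strict decrease at $x_0$ to get a strictly negative effective growth rate, or directly the results of \cite{FZ1997JDE}) handles the borderline $\sigma_1=0$.

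One caveat worth flagging in the write-up: the lemma is stated for ``every initial data $\phi\in\mathbb{C}_\tau$,'' but \eqref{2020c-eq10} is delay-free, so what is really meant is the initial value $z(\cdot,0)=\phi_1(0,\cdot)$ (or any nonnegative continuous initial profile); I would make that reduction explicit at the start so the citation to the non-delay results in \cite{FZ1997JDE,RN4} is clean. The main obstacle is not any single hard estimate but verifying that Assumption~\ref{2020c-a1} genuinely implies the hypotheses of the cited propositions --- in particular that the generalized logistic nonlinearity $z\mathcal{F}(x,z)$ is ``admissible'' (continuously differentiable, sub-homogeneous, with the right behavior at $0$ and at $K_0$) --- and in confirming that the $x$-dependence and the Robin boundary condition with $\eta_1>0$ are covered by the versions of those results one cites. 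Once that dictionary is in place, the proof is essentially a quotation of \cite{FZ1997JDE,RN4} plus the comparison squeeze described above.
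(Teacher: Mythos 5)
The paper offers no proof of this lemma beyond the citation to \cite{FZ1997JDE} and \cite{RN4}, and your proposal takes exactly that route: check that Assumption~\ref{2020c-a1}(1)--(2) makes $z\mathcal{F}(x,z)$ an admissible sub-homogeneous logistic-type nonlinearity with $K_0$ a uniform supersolution, then quote the principal-eigenvalue dichotomy. The details you fill in (the $\epsilon\xi_1$ subsolution for $\sigma_1>0$, uniqueness from strict sub-homogeneity at $x_0$, monotone-semiflow squeezing, and linear comparison for $\sigma_1\le 0$) are the standard argument behind the cited propositions and are correct, and your caveats about the delay-free reduction of the initial data and the borderline case $\sigma_1=0$ are well taken.
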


\begin{theorem}\label{2020c-T1}
For any $\phi\in \mathbb{C}_{\tau}$, system \eqref{2020c-eq8} admits a unique classical solution $u(t,x,\phi)$ on  $(t,x)\in [0,+\infty)\times \overline{\Omega}$. Furthermore, the semi-flow solution $\Phi(t): \mathbb{C}_{\tau}\rightarrow\mathbb{C}_\tau$ has a compact global attractor $\mathcal{A}$.
\end{theorem}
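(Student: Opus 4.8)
The plan is to first upgrade the local solution of Theorem \ref{2023T1} to a global one by a priori bounds, and then verify the hypotheses of the standard global-attractor theorem for the semiflow $\Phi(t)$: point dissipativity, boundedness of $\Phi(t)$ on bounded sets, and asymptotic smoothness.

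\textbf{Step 1 (global existence and dissipativity).} I would start from the first equation of \eqref{2020c-eq8}. Because $I\ge0$, $\mathcal{G}\ge0$ and $\mathcal{F}(x,\cdot)$ is decreasing (Assumption \ref{2020c-a1}(2),(4)), $U$ is a subsolution of \eqref{2020c-eq10}, so the comparison principle together with Lemma \ref{2020c-L3.1} gives $U(t,\cdot)\le\max\{\|\phi_1(0)\|_{Y},K_0\}$ on the whole existence interval and $\limsup_{t\to\infty}\|U(t,\cdot)\|_{Y}\le K_0$. Next I would use Assumption \ref{2020c-a1}(6) to get a constant $L$ with $\mathcal{G}(x,u_1,u_2)\le Lu_1$, so the source terms of the $I$- and $V$-equations are dominated by $L\,\|U\|_{Y}$ times $\int_\Omega\Gamma(\cdot,y,a)\,dy$; since $d_2\Delta-\alpha(x)$ has a negative principal eigenvalue (as $\eta_2>0$, $\alpha>0$), this last integral decays exponentially in $a$, which bounds $I$ uniformly and bounds the nonlocal source in the $V$-equation by a constant. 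Comparing the $V$-equation with the linear problem $\partial_tw=d_2\Delta w-\alpha(x)w+\mathrm{const}$ then bounds $V$ on finite time intervals, so $t_\infty=\infty$, and parabolic smoothing makes $u$ classical on $[0,\infty)\times\overline\Omega$. Pushing the same estimates to $t\to\infty$ (using the decay/convergence dichotomy in Lemma \ref{2020c-L3.1}) produces a bounded absorbing set $B_0\subset\mathbb{C}_\tau$, point dissipativity, and boundedness of $\Phi(t)$ on bounded sets.

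\textbf{Step 2 (asymptotic smoothness).} I would use the mild formulation $u(t)=T(t)\phi(0)+\int_0^tT(t-s)\mathcal{L}(u_s)\,ds$ of \eqref{2020c-eq9} and split $\Phi(t)=\Phi_s(t)+\Phi_c(t)$, where $\Phi_s(t)\phi$ propagates $T(t)\phi(0)$. Since both $A_1=d_1\Delta$ and $A_2=d_2\Delta-\alpha(x)$ (with the given Robin conditions) generate exponentially decaying $C_0$-semigroups, $\Phi_s(t)\to0$ uniformly on bounded sets. On a bounded orbit, $\mathcal{L}(u_s)$ stays bounded in $\mathbb{X}$ by Step 1 together with the smoothness and boundedness of $\mathcal{F}$, $\mathcal{G}$, $\Gamma$, while $T(t-s)$ is compact for $s<t$ because $\overline\Omega$ is bounded with smooth boundary; the usual truncation $\int_0^t=\int_0^{t-\varepsilon}+\int_{t-\varepsilon}^t$ then shows $\Phi_c(t)$ sends bounded sets to relatively compact subsets of $\mathbb{X}$, and the same formula yields equicontinuity in the history variable $\theta\in[-\tau,0]$ as soon as $t>\tau$, so $\Phi_c(t)$ is completely continuous for $t>\tau$. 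Hence $\Phi(t)$ is asymptotically smooth (indeed eventually compact).

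\textbf{Step 3 (conclusion and main obstacle).} Point dissipativity, boundedness on bounded sets, and asymptotic smoothness imply, by the standard theorem on dissipative semiflows, that $\Phi(t)$ has a connected compact global attractor $\mathcal{A}$, which is the assertion. I expect Step 2 to be the main obstacle: since $T(0)=\mathrm{Id}$ is not compact, the compactness of $\Phi_c(t)$ must be squeezed out of the convolution structure, and the nonlocal delayed source $\int_\Omega\Gamma(\cdot,y,\tau)\mathcal{G}(y,U(t-\tau,\cdot),V(t-\tau,\cdot))\,dy$ has to be controlled uniformly in the delay variable. A secondary delicate point is the a priori $L^\infty$ bound on $V$ in Step 1, which — $V$ being only indirectly coupled to itself — relies crucially on the boundedness of $\mathcal{G}(x,u_1,u_2)/u_1$ from Assumption \ref{2020c-a1}(6).
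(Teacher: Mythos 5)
Your overall architecture (comparison with \eqref{2020c-eq10} for $U$, an a priori bound on $V$, then point dissipativity plus the standard dissipative-semiflow theorem) matches the paper's, and your Step 2 is a reasonable expansion of what the paper delegates to Theorem 2.1.8 of \cite{wu1996book} and Theorem 3.4.8 of \cite{hale2010asymptotic}. The gap is in Step 1, precisely at the point you yourself flag as delicate: you invoke Assumption \ref{2020c-a1}(6) to produce a constant $L$ with $\mathcal{G}(x,u_1,u_2)\leq L u_1$ \emph{uniformly in $u_2$}. No such uniform constant is available in the paper's intended setting: the mass-action incidence $\mathcal{G}=\beta U V$ of Table \ref{2023a.Table2} gives $\mathcal{G}/u_1=\beta u_2$, which is unbounded in $u_2$, so any bound of the form $\mathcal{G}\leq Lu_1$ has $L$ depending on the very quantity $V$ you are trying to control. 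Your estimate of the nonlocal source in the $V$-equation by a constant is therefore circular for linear-in-$V$ incidences, and with it the uniform bound on $V$, the absorbing set, and global existence of $V$ all collapse in that case. (Your argument is fine for saturated incidences such as Holling type II, where $\mathcal{G}/u_1\leq\beta/h$.)

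The paper closes this gap differently. Using Assumption \ref{2020c-a1}(5) and the bound $U\leq B$, it gets $\mathcal{G}(y,U,V)\leq cV$, hence the differential inequality \eqref{2020c-eq12}, $\partial_t V\leq d_2\Delta V-\alpha(x)V+c\,\overline{V}(t-\tau)$ with $\overline{V}(t)=\int_\Omega V(t,x)\,dx$; the problem is thus reduced to bounding the spatial average $\overline{V}$. That is done by a mass-balance trick: integrating the $U$-equation over $\Omega$ (divergence theorem plus the Robin condition) yields \eqref{2020c-eq13}, i.e.
\begin{equation*}
\int_{\Omega}\mathcal{G}\big(x,U(t,x),V(t,x)\big)\,dx\;\leq\;\int_{\Omega}U\,\mathcal{F}(x,U+I)\,dx-\partial_t\overline{U}(t)\;\leq\;k-\partial_t\overline{U}(t),
\end{equation*}
so the total infection rate is controlled by the (bounded) tumor production without any pointwise bound on $V$. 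Feeding this into the integrated $V$-equation gives $\partial_t\overline{V}\leq-\alpha_0\overline{V}+k_2-k_1\partial_t\overline{U}(t-\tau)$, whose integration (using that $\overline{U}$ is bounded) bounds $\overline{V}$ uniformly for large $t$; plugging that back into \eqref{2020c-eq12} then bounds $V$ pointwise. If you want to keep your more direct route, you must either strengthen Assumption \ref{2020c-a1}(6) to a bound uniform in $u_2$ (thereby excluding mass action) or insert this integral step.
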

\begin{proof}By the Theorem \ref{2023T1}, one gets that system \eqref{2020c-eq8}-\eqref{2020c-maineq3} admits a unique classical solution in $[0,t_{\infty})$ with $U,\ V \geq0$. Then, by the first equation of system \eqref{2020c-eq8},
	$$\partial_t U(t,x)\leq d_1\Delta U(t,x)+U(t,x)\mathcal{F}\Big(x,U(t,x)\Big).$$
By Lemma \ref{2020c-L3.1} and compare theorem, there is a positive constant $B$ such that $U(t,x)\leq B$  for any initial function $\phi\in\mathbb{C}$ as $t>t(\phi)$.

Making use of the boundedness of $U(t,x)$ and Assumption \ref{2020c-a1}(5) in the second equation of \eqref{2020c-eq8}, one gets
\begin{equation}\label{2020c-eq12}
\partial_t V(t,x)\leq d_2\Delta V(t,x)-\alpha(x) V(t,x)+ c\overline{V}(t-\tau),
\end{equation}
for some positive constant $c$, where $\overline{V}(t)=\int_{\Omega} V(t,x)dx$. To show the boundness of $\overline{V}(t)$, integrating the first equation of \eqref{2020c-eq8},
\begin{align*}
\partial_t \overline{U}(t)& =\int_{\Omega} d_1\Delta U(t,x)dx- \int_{\Omega}\mathcal{G}\Big(x,U(t,x), V(t,x)\Big)dx\\
\qquad&+\int_{\Omega} U(t,x)\mathcal{F}\Big(x,U(t,x)+I(t,x)\Big)dx\\
&\leq -\int_{\Omega}\mathcal{G}\Big(x,U(t,x), V(t,x)\Big)dx+\int_{\Omega} U(t,x)\mathcal{F}\Big(x,U(t,x)+I(t,x)\Big)dx
\end{align*}
where  $\overline{U}(t)=\int_{\Omega} U(t,x)dx$, and the inequality is based on the divergence theorem and boundary condition. Thus,
\begin{equation}\label{2020c-eq13}
\int_{\Omega}\mathcal{G}\Big(x,U(t,x), V(t,x)\Big)dx\leq \int_{\Omega} U(t,x)\mathcal{F}\Big(x,U(t,x)+I(t,x)\Big)dx-\partial_t \overline{U}(t).
\end{equation}
Similarly, integrating the second equation of \eqref{2020c-eq8} with respect to $x\in\Omega$, and using \eqref{2020c-eq13}, then there exist two positive numbers $k_1,\ k_2$ such that
\begin{align*}
\partial_t \overline{V}(t)& \leq-\int_{\Omega}\alpha(x)V(t,x)dx+\int_{\Omega}\int_{\Omega}\Gamma(x,y,\tau)\mathcal{G}\Big(y,U(t-\tau,y),V(t-\tau,y)\Big)dydx\\
&\leq -\alpha_0 \overline{V}(t)+k_1\int_{\Omega}\mathcal{G}\Big(y,U(t-\tau,y),V(t-\tau,y)\Big)dy\\
&\leq -\alpha_0 \overline{V}(t)+k_1\int_{\Omega} U(t-\tau,y)\mathcal{F}\Big(x,U(t-\tau,y)+I(t-\tau,y)\Big)  dy-k_1\partial_t \overline{U}(t-\tau)\\
&\leq -\alpha_0 \overline{V}(t)+k_2-k_1\partial_t \overline{U}(t-\tau).
\end{align*}
where 
\begin{equation*}
    \begin{aligned}
        &\alpha_0 = \max_{x\in\overline{\Omega}} \alpha(x),\\
        &k_1 = \max_{(x,y) \in \Omega \times \Omega} \Gamma( x, y, \tau) mes(\Omega),\\
        &k_2 = k_1 mes(\Omega) \max_{ (x,y)\in\overline{\Omega}\times [0,B]}y \mathcal{F}(x,y),
    \end{aligned}
\end{equation*}
 and $mes(\Omega)$ denotes the Lebesgue measure of $\Omega$.

Hence,
\begin{equation*}
    \frac{d}{dt}(\overline{V}(t)e^{\alpha_0 t})\leq k_2e^{\alpha_0 t}-k_1 e^{\alpha_0 t}\frac{d\overline{U}(t-\tau)}{dt}
\end{equation*}
Integrating by parts the above inequality over $t\in [t_1(\phi),t]$, we can find a positive numbers $k_3$, independent of $\phi$, and a positive number $k_4=k_4(\phi)$ dependent on $\phi$ such that
$$\overline{V}(t)\leq k_4(\phi) e^{-\alpha_0 t}+k_3,\qquad t\geq t(\phi).$$
It confirms the boundedness of $\overline{V}(t)$. Combining this with \eqref{2020c-eq12}, there exists a positive number $B_1$, independent of $\phi$, such that $V(t,x)\leq B_1$ if $t\geq t_2(\phi)$. And the solution semi-flows $\Phi(t)$is point dissipaative. Moreover, by Theorem 2.1.8 in \cite{wu1996book} and Theorem 3.4.8 in \cite{hale2010asymptotic}, $\Phi(t) $ has a compact global attractor $\mathcal{A}$.
\end{proof}

\section{Global extinction and persistence}\label{2023section4}
It is evident that the system \eqref{2020c-eq8} has a steady state at $E_0=(0,0)$. Additionally, Lemma \ref{2020c-L3.1} can be utilized to infer that if $\sigma_1>0$, there exists another steady state $E_1=(U_1(x),0)$, where $U_1(x)$ is equivalent to $z_*(x)$ as defined in Lemma \ref{2020c-L3.1}. We now proceed to examine the global asymptotic behavior of system \eqref{2020c-eq11}.
\begin{theorem} If $\sigma_1<0$, then $E_0$ is globally attractive for the model \eqref{2020c-eq8}-\eqref{2020c-maineq3}. In the other words, for any $\phi=(\phi_1,\phi_2)\in \mathbb{C}_{\tau}$, the solution $u(t,\phi)$ of model \eqref{2020c-eq8} satisfies
$$\lim_{t\rightarrow+\infty} u(t,\phi)(x)=(0,0)$$
uniformly for $x\in\overline{\Omega}$.\label{2020c-T3.2}
\end{theorem}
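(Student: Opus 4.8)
The plan is to exploit the fact that $\sigma_1<0$ forces the uninfected tumor population $U$ to decay exponentially, after which the virus equation becomes a subsystem with exponentially decaying forcing, so $V$ also dies out. First I would recall from the proof of Theorem \ref{2020c-T1} that $U$ satisfies the differential inequality
\begin{equation*}
\partial_t U(t,x)\le d_1\Delta U(t,x)+U(t,x)\mathcal{F}\big(x,U(t,x)\big),
\end{equation*}
with the Robin boundary condition. By the comparison principle, $U(t,x)\le z(t,x)$ where $z$ solves the scalar problem \eqref{2020c-eq10} with initial data $z(0,\cdot)=\phi_1(0,\cdot)$. Since $\sigma_1<0$, Lemma \ref{2020c-L3.1} gives that $z(t,\cdot)$ decays exponentially to $0$ uniformly on $\overline{\Omega}$; hence there are constants $M>0$ and $\gamma>0$ (with $\gamma$ related to $-\sigma_1$) such that $0\le U(t,x)\le M e^{-\gamma t}$ for all $x\in\overline{\Omega}$ and $t\ge 0$.

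Next I would push this decay into the $V$-equation. By Assumption \ref{2020c-a1}(3) and (6), $\mathcal{G}(y,U,V)=\frac{\mathcal{G}(y,U,V)}{U}\,U \le c\,U$ for some constant $c$ coming from the boundedness of $\mathcal{G}(y,u_1,u_2)/u_1$ (using also the a priori bound $V\le B_1$ from Theorem \ref{2020c-T1}, so that the quotient is evaluated on a bounded set). Therefore, for $t$ large,
\begin{equation*}
\partial_t V(t,x)\le d_2\Delta V(t,x)-\alpha(x)V(t,x)+c\int_{\Omega}\Gamma(x,y,\tau)U(t-\tau,y)\,dy\le d_2\Delta V(t,x)-\alpha(x)V(t,x)+c_1 e^{-\gamma(t-\tau)},
\end{equation*}
for a suitable constant $c_1$, using the boundedness of $\Gamma$. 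Comparing with the solution of the linear problem $\partial_t w=d_2\Delta w-\alpha(x)w+c_1 e^{-\gamma(t-\tau)}$ under the same boundary condition, and using that the semigroup $T_2(t)$ generated by $d_2\Delta-\alpha(x)$ is exponentially stable (its spectral bound is negative since $\alpha(x)>0$), one obtains $V(t,x)\to 0$ uniformly on $\overline{\Omega}$; indeed a Duhamel/variation-of-constants estimate gives $\|V(t,\cdot)\|_Y\le \|T_2(t)\|\,\|V(0,\cdot)\|_Y+c_1\int_0^t\|T_2(t-s)\|e^{-\gamma(s-\tau)}ds$, and both terms tend to $0$. Finally, with $V(t,\cdot)\to 0$ already established and $U(t,\cdot)\to 0$ from the first step, the claim $u(t,\phi)(x)\to(0,0)$ uniformly on $\overline{\Omega}$ follows.

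The main obstacle I expect is making the two comparison arguments fully rigorous: the first equation for $U$ is only an inequality with the extra proliferation/infection structure, so one must be careful that dropping the $-\mathcal{G}$ term and the $I$-dependence inside $\mathcal{F}$ (legitimate by Assumption \ref{2020c-a1}(2), monotonicity of $\mathcal{F}$, and positivity of $\mathcal{G}$) still yields a valid supersolution, and then invoke the parabolic comparison principle with Robin boundary conditions. The second subtlety is the delay: the forcing term in the $V$-equation involves $U(t-\tau,\cdot)$, so the exponential estimate on $V$ must be set up on $[t_0,\infty)$ for $t_0$ large enough that the bound $U(t,x)\le Me^{-\gamma t}$ and $V\le B_1$ both hold, and one should phrase the Duhamel estimate with initial time $t_0$. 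Once these comparison/semigroup estimates are in place, the uniform convergence to $(0,0)$ is routine.
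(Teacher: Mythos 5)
Your argument is correct, and the first half (the $U$-component) is exactly the paper's: drop the nonnegative $-\mathcal{G}$ term, use the monotonicity of $\mathcal{F}$ from Assumption \ref{2020c-a1}(2) to compare with the scalar problem \eqref{2020c-eq10}, and invoke Lemma \ref{2020c-L3.1} with $\sigma_1<0$. For the $V$-component you diverge from the paper in a genuine way. The paper treats \eqref{2020c-eq14} as a nonautonomous system that is asymptotic (because $U\to 0$ and $V$ stays bounded) to the linear autonomous equation \eqref{2020c-eq15}, whose solutions decay, and then concludes via Thieme's theorem on asymptotically autonomous semiflows. You instead make the decay quantitative: the exponential estimate $U\le Me^{-\gamma t}$ from Lemma \ref{2020c-L3.1}, combined with Assumption \ref{2020c-a1}(6) in the form $\mathcal{G}(y,U,V)\le cU$, turns the nonlocal source in the $V$-equation into an exponentially decaying forcing, and a Duhamel estimate with the exponentially stable semigroup $T_2(t)$ (spectral bound negative since $\alpha>0$ on $\overline{\Omega}$) finishes the job. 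Your route is more elementary and yields an explicit decay rate for $V$, at the cost of needing the exponential (not merely asymptotic) decay of $U$ and the uniform bound on $\mathcal{G}(x,u_1,u_2)/u_1$; the paper's route needs only $U\to 0$ but imports the abstract asymptotically-autonomous machinery. The two subtleties you flag --- validity of the parabolic comparison under Robin conditions and restarting the Duhamel estimate at a time $t_0$ large enough that both the exponential bound on $U$ and the bound $V\le B_1$ hold (to absorb the delay $\tau$ in the forcing) --- are exactly the right ones, and both are routine to discharge.
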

\begin{proof} By the first equation of system \eqref{2020c-eq8}, it is obvious that
\begin{equation}
\partial_t U(t,x)\leq d_1\Delta U(t,x)+U(t,x)\mathcal{F}\Big(x,U(t,x)\Big).\label{2020c-eq17}
\end{equation}

By Lemma \ref{2020c-L3.1} and the standard comparison theorem, one gets that $\lim_{t\rightarrow+\infty} U(t,x)=0$ uniformly for $x\in\overline{\Omega}$ if $\sigma_1<0$.

Now, we regard the $V(t,x)$ as a solution of the following nonautonmous reaction diffusion equation:
\begin{equation}\label{2020c-eq14}
\begin{cases}
\partial_t V(t,x)=d_2 \Delta V(t,x)-\alpha(x) V(t,x)\\
\qquad +\int_{\Omega}\Gamma(x,y,\tau)\mathcal{G}\Big(y,U(t-\tau),V(t-\tau,y)\Big)dy,& x\in\Omega, t>0\\
\frac{\partial V(t,x)}{\partial \overrightarrow{n}}+\eta_2(x)V(t,x)=0.& x\in\partial\Omega,t>0.
\end{cases}
\end{equation}
Since it is known that $\lim_{t\rightarrow +\infty}U(t,x)=0$ uniformly for $x\in\overline{\Omega}$, and $V(t,x)$ remains bounded, it can be inferred that the  system \eqref{2020c-eq14} is asymptotic to the following linear autonomous reaction diffusion equation
\begin{equation}\label{2020c-eq15}
\begin{cases}
\partial_t \hat{V}(t,x)= d_2\Delta \hat{V}(t,x)-\alpha(x) \hat{V}(t,x) & x\in\Omega, t>0\\
\frac{\partial \hat{V}(t,x)}{\partial \overrightarrow{n}}+\eta_2(x)\hat{V}(t,x)=0. & x\in\partial\Omega,t>0.
\end{cases}
\end{equation}
By Theorem 2.2.1 in \cite{RN4}, all non-negative solution of system \eqref{2020c-eq15} will decay to $0$. By a generalized Marku's theorem for asymptotically autonomous semi-flows (Theorem 4.1 in \cite{Thieme1993jmb}), one obtains that $\lim_{t\rightarrow+\infty} V(t,x)=0$.
\end{proof}

To understand the asymptotic stability on steady state $E_1$, we consider the non-local elliptic eigenvalue problem with forcing function $U_1(x)$:
\begin{equation}\label{2020c-eq16}
\begin{cases}
\lambda\psi=d_2 \Delta \psi-\alpha(x)\psi+\int_{\Omega}\Gamma(x,y,\tau) \partial_V \mathcal{G}\Big(y,U_1(y),0\Big) \psi(y)dy e^{-\lambda\tau}, & x\in\Omega\\
\frac{\partial \psi}{\overrightarrow{n}}+\eta_2(x)\psi=0, & x\in\partial\Omega.
\end{cases}
\end{equation}
\begin{lemma}(Theorem 2.2 of \cite{RN111}) Model \eqref{2020c-eq16} exists a principle eigenvalue $\lambda_1(U_1(x))$ associated with a strictly positive eigenfunction $\psi_1$.\label{2020c-l3.2}
\end{lemma}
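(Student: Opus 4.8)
The plan is to turn the transcendental eigenvalue problem \eqref{2020c-eq16} into a one–parameter family of \emph{linear} nonlocal elliptic eigenvalue problems and then locate a fixed point of the associated principal–eigenvalue map. For each $\mu\in\mathbb{R}$ consider the auxiliary problem obtained by freezing the exponential at level $\mu$:
\begin{equation*}
\begin{cases}
\lambda\psi=d_2\Delta\psi-\alpha(x)\psi+e^{-\mu\tau}\displaystyle\int_{\Omega}\Gamma(x,y,\tau)\,\partial_V\mathcal{G}\big(y,U_1(y),0\big)\,\psi(y)\,dy,& x\in\Omega,\\
\dfrac{\partial\psi}{\partial\overrightarrow{n}}+\eta_2(x)\psi=0,& x\in\partial\Omega.
\end{cases}
\end{equation*}
A value $\lambda=\mu=:\lambda_1$ solving this self–consistency condition is precisely a principal eigenvalue of \eqref{2020c-eq16}, with the same eigenfunction.

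First I would fix $\mu$ and analyse the auxiliary problem. Write $\mathcal{B}_\mu:=A_2+e^{-\mu\tau}\mathcal{K}$, where $A_2=d_2\Delta-\alpha(x)$ with the Robin condition $\partial_{\overrightarrow{n}}+\eta_2(x)$ acting on $Y=\mathbb{C}(\overline{\Omega},\mathbb{R})$, and $\mathcal{K}\psi(x)=\int_\Omega\Gamma(x,y,\tau)\,\partial_V\mathcal{G}(y,U_1(y),0)\,\psi(y)\,dy$. The operator $A_2$ generates an analytic, positive $C_0$–semigroup with compact resolvent; by Assumption \ref{2020c-a1}(4) applied at $(y,U_1(y),0)$ (note $U_1=z_*\ge 0$ by Lemma \ref{2020c-L3.1}) together with the strict positivity and continuity of the fundamental solution $\Gamma(\cdot,\cdot,\tau)$, the kernel of $\mathcal{K}$ is strictly positive, so $\mathcal{K}$ is a compact, strongly positive linear operator on $Y$. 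Hence $\mathcal{B}_\mu$ is resolvent–positive with compact resolvent and, since $\Omega$ is connected, irreducible. The Krein–Rutman theorem (in the resolvent–positive form commonly used for reaction–diffusion spectral problems) then gives that $\widehat\lambda(\mu):=s(\mathcal{B}_\mu)$ is an algebraically simple eigenvalue with a strictly positive eigenfunction $\psi_\mu$ (strict positivity up to $\partial\Omega$ following from the Hopf lemma and the Robin condition), and it is the only eigenvalue possessing a positive eigenfunction.

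Next I would study the scalar map $\mu\mapsto\widehat\lambda(\mu)$. Analytic perturbation theory for the isolated simple eigenvalue shows it is continuous in $\mu$; monotonicity of the principal eigenvalue with respect to positive perturbations, combined with the fact that $e^{-\mu\tau}\mathcal{K}$ is decreasing in $\mu$ in the sense of positive operators, shows $\widehat\lambda$ is non-increasing, whereas $\mu\mapsto\mu$ is strictly increasing. As $\mu\to-\infty$, $e^{-\mu\tau}\mathcal{K}$ blows up so $\widehat\lambda(\mu)\to+\infty>\mu$; as $\mu\to+\infty$, $e^{-\mu\tau}\mathcal{K}\to0$ so $\widehat\lambda(\mu)\to s(A_2)<+\infty$, hence $\widehat\lambda(\mu)<\mu$ for $\mu$ large. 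Thus $g(\mu):=\widehat\lambda(\mu)-\mu$ is strictly decreasing and changes sign, so it has a unique zero $\lambda_1=\lambda_1(U_1(x))$. Setting $\psi_1:=\psi_{\lambda_1}$ and replacing $e^{-\mu\tau}$ by $e^{-\lambda_1\tau}$ recovers \eqref{2020c-eq16}; simplicity of $\lambda_1$ and the uniqueness of the eigenvalue with a positive eigenfunction are inherited from the auxiliary problem.

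The main obstacle is the verification that the nonlocal perturbation $\mathcal{K}$ is compact and strongly positive on $\mathbb{C}(\overline{\Omega},\mathbb{R})$ and that $\mathcal{B}_\mu$ is genuinely irreducible, so that Krein–Rutman yields a strictly positive rather than merely nonnegative eigenfunction; this rests entirely on the strict positivity, boundedness, and smoothing of the Green function $\Gamma(\cdot,\cdot,\tau)$ of $d_2\Delta-\alpha(x)$ under the Robin boundary condition, and on $\partial_V\mathcal{G}(y,U_1(y),0)>0$ for all $y\in\overline{\Omega}$, which is exactly Assumption \ref{2020c-a1}(4). Once these facts are secured, the monotone fixed-point argument in $\mu$ is routine.
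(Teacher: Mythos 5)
Your argument is correct, and it is essentially the standard proof of this fact: the paper itself gives no proof, deferring entirely to Theorem 2.2 of \cite{RN111}, and that cited result is established by exactly the device you use — freezing the factor $e^{-\lambda\tau}$ at $e^{-\mu\tau}$, applying Krein--Rutman to the resulting family of compact, irreducible, resolvent-positive operators $\mathcal{B}_\mu$, and locating the unique fixed point of the strictly decreasing map $\mu\mapsto\widehat\lambda(\mu)-\mu$. The only points worth recording explicitly are that strict positivity of the kernel requires $U_1(y)=z_*(y)>0$ (guaranteed by Lemma \ref{2020c-L3.1} when $\sigma_1>0$) so that Assumption \ref{2020c-a1}(4) applies, and that the case $\tau=0$ is degenerate but trivial since the problem is then already linear; neither affects the validity of your argument.
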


\begin{theorem}If $\sigma_1>0$ and $\lambda_1(U_1(x))<0$, then $E_1$ is global attractive in $\mathbb{C}_{\tau}$.\label{2020c-T3.3}
\end{theorem}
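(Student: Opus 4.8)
\emph{Strategy.} The plan is to squeeze the $U$--component between $U_1$ from above and from below while showing the $V$--component dies out, all uniformly on $\overline{\Omega}$; the hypothesis $\sigma_1>0$ supplies the two bounds on $U$ through Lemma \ref{2020c-L3.1}, and $\lambda_1(U_1(x))<0$ forces $V\to0$. By Theorem \ref{2020c-T1} the solution is global and bounded, say $0\le U\le B$ and $0\le V\le B_1$ for $t$ large. As in the proof of Theorem \ref{2020c-T1}, Assumption \ref{2020c-a1}(4) together with $I\ge0$ gives $\partial_t U\le d_1\Delta U+U\mathcal{F}(x,U)$, so $U$ is a subsolution of \eqref{2020c-eq10}; comparing with the solution of \eqref{2020c-eq10} issued from $U(T,\cdot)$ and invoking Lemma \ref{2020c-L3.1} (with $\sigma_1>0$) yields $\limsup_{t\to\infty}U(t,x)\le U_1(x)$ uniformly for $x\in\overline{\Omega}$.

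\emph{Decay of $V$.} This is the heart of the matter. First I would record that Assumption \ref{2020c-a1}(6) forces $u_1\mapsto\partial_{u_2}\mathcal{G}(x,u_1,0)$ to be nondecreasing: for each fixed $u_2>0$ the map $u_1\mapsto\mathcal{G}(x,u_1,u_2)/(u_1u_2)$ is nondecreasing, and letting $u_2\to0^+$ (using $\mathcal{G}(x,u_1,0)=0$) shows $u_1\mapsto\partial_{u_2}\mathcal{G}(x,u_1,0)/u_1$ is nondecreasing, hence so is $u_1\mapsto\partial_{u_2}\mathcal{G}(x,u_1,0)=u_1\cdot\big(\partial_{u_2}\mathcal{G}(x,u_1,0)/u_1\big)$. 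Since $\lambda_1(U_1(x))<0$ and the principal eigenvalue of \eqref{2020c-eq16} depends continuously on its zeroth-order coefficient, I can fix $\delta>0$ small enough that the principal eigenvalue $\lambda_1^\delta$ of \eqref{2020c-eq16} with $\partial_V\mathcal{G}(y,U_1(y),0)$ replaced by $\partial_V\mathcal{G}(y,U_1(y),0)+\delta$ is still negative. By the monotonicity just noted, the bound $\limsup U\le U_1$, and uniform continuity of $\partial_{u_2}\mathcal{G}$ on compacts, there is $T_\delta$ with $\partial_{u_2}\mathcal{G}(y,U(t-\tau,y),0)\le\partial_{u_2}\mathcal{G}(y,U_1(y),0)+\delta$ for all $t\ge T_\delta$. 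Inserting Assumption \ref{2020c-a1}(5) into the second equation of \eqref{2020c-eq8} then gives, for $t\ge T_\delta$,
\[
\partial_t V\le d_2\Delta V-\alpha(x)V+\int_\Omega\Gamma(x,y,\tau)\big(\partial_{u_2}\mathcal{G}(y,U_1(y),0)+\delta\big)V(t-\tau,y)\,dy .
\]
The associated equality is a quasimonotone linear nonlocal delay reaction--diffusion equation (the kernel $\Gamma$ and the coefficient are nonnegative) whose zero solution is exponentially stable because its principal eigenvalue is exactly $\lambda_1^\delta<0$; a comparison argument then forces $\lim_{t\to\infty}V(t,x)=0$ uniformly. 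Since $U$ is bounded, Assumption \ref{2020c-a1}(5) now also gives $\mathcal{G}(x,U(t,x),V(t,x))\to0$ and $I(t,x)\to0$ uniformly on $\overline{\Omega}$.

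\emph{Lower bound on $U$.} Fix a small $\epsilon>0$. For $t$ large, $I(t,\cdot)\le\epsilon$, $V(t,\cdot)\le\epsilon$, and by the monotonicity above $\mathcal{G}(x,U,V)\le\partial_{u_2}\mathcal{G}(x,U,0)V\le M\,U\,V\le M\epsilon\,U$, with $M=\tfrac1B\max_{\overline{\Omega}}\partial_{u_2}\mathcal{G}(\cdot,B,0)<\infty$ (Assumption \ref{2020c-a1}(6) again). Since $\mathcal{F}$ is nonincreasing, for $t$ large
\[
\partial_t U\ge d_1\Delta U+U\big[\mathcal{F}(x,U+\epsilon)-M\epsilon\big],
\]
which is \eqref{2020c-eq10} with $\mathcal{F}$ replaced by $\mathcal{F}_\epsilon(x,s):=\mathcal{F}(x,s+\epsilon)-M\epsilon$. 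For $\epsilon$ small $\mathcal{F}_\epsilon$ still satisfies Assumption \ref{2020c-a1}(1)--(2), and the principal eigenvalue of $d_1\Delta+\mathcal{F}_\epsilon(x,0)$ tends to $\sigma_1>0$ as $\epsilon\to0$, so it stays positive. Assuming $\phi_1(0,\cdot)\not\equiv0$ (in the degenerate case $\phi_1(0,\cdot)\equiv0$ one checks directly that $U\equiv0$ and only $V\to0$ holds), the strong maximum principle gives $U(t,\cdot)>0$ for $t>0$; comparing with the solution of the $\mathcal{F}_\epsilon$--equation started from a positive constant $\le\min_{\overline{\Omega}}U(T,\cdot)$ and applying Lemma \ref{2020c-L3.1} yields $\liminf_{t\to\infty}U(t,x)\ge z_*^\epsilon(x)$ uniformly, where $z_*^\epsilon$ is the positive equilibrium of the $\mathcal{F}_\epsilon$--equation. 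Letting $\epsilon\to0$, elliptic estimates and uniqueness of the positive equilibrium give $z_*^\epsilon\to U_1$ uniformly, hence $\liminf_{t\to\infty}U(t,x)\ge U_1(x)$. Combined with the upper bound and $V\to0$, this is precisely $u(t,\cdot)\to E_1$.

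\emph{Main obstacle.} The delicate part is the decay of $V$: one must (i) extract the monotonicity of $\partial_{u_2}\mathcal{G}(x,\cdot,0)$ from Assumption \ref{2020c-a1}(6) so that $\limsup U\le U_1$ can be transferred to the delay coefficient, (ii) justify continuity of the principal eigenvalue of the nonlocal problem \eqref{2020c-eq16} under a small perturbation of its coefficient, and (iii) invoke the comparison principle and the eigenvalue-governed exponential decay for the linear nonlocal delay reaction--diffusion equation. A secondary technical point is the continuity $z_*^\epsilon\to U_1$ of the perturbed positive equilibria used in the lower bound, and the harmless but worth-stating exclusion of the trivial slice $\phi_1(0,\cdot)\equiv0$.
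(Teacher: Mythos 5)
Your proposal is correct and its first two steps coincide with the paper's argument: the upper bound $\limsup_{t\to\infty}U\le U_1$ comes from comparison with \eqref{2020c-eq10} and Lemma \ref{2020c-L3.1}, and the decay of $V$ comes from dominating the second equation by a linear nonlocal delay equation whose principal eigenvalue is a small perturbation of $\lambda_1(U_1(x))<0$. (The paper perturbs by replacing $U_1$ with $U_1+\epsilon$ in the first slot of $\partial_V\mathcal{G}$, using Assumption \ref{2020c-a1}(4) directly, which is slightly more economical than your detour through the monotonicity of $\partial_{u_2}\mathcal{G}(x,\cdot,0)$ extracted from Assumption (6); both land on the same comparison system and both need continuity of the nonlocal principal eigenvalue in its coefficient.) Where you genuinely diverge is the final step, $U\to U_1$: the paper treats the $U$-equation as an asymptotically autonomous system with limit equation \eqref{2020c-eq10}, invokes Thieme's theorem together with a cited lemma to rule out convergence to $0$, and concludes; you instead build an explicit lower barrier $\partial_t U\ge d_1\Delta U+U[\mathcal{F}(x,U+\epsilon)-M\epsilon]$, apply Lemma \ref{2020c-L3.1} to the perturbed equation, and pass to the limit $z_*^\epsilon\to U_1$. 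Your route is more elementary and self-contained (no appeal to the Markus-type theorem) at the cost of having to justify the continuity of the perturbed positive equilibria, which is standard. Your explicit exclusion of the slice $\phi_1(0,\cdot)\equiv0$ is a legitimate point: on that invariant set $U\equiv0$ and convergence to $E_1$ fails, so the theorem implicitly requires $\phi_1\not\equiv0$ --- the paper's citation of the lemma ruling out $U\to0$ quietly presupposes this.
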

\begin{proof}By the first equation of system \eqref{2020c-eq8}, $U(t,x)$ satisfies inequality \eqref{2020c-eq17}. Thus, for any $\epsilon>0$ there exists a $t_1=t(\phi)$ depending on initial data $\phi$ such that $U(t,x)\leq U_1(x)+\epsilon$ for $t>t_1$. Taking this result into the second equation of system \eqref{2020c-eq8}, one gets
\begin{eqnarray*}
&&\partial_t V(t,x)\\
&\leq& d_2 \Delta V(t,x)-\alpha(x)V(t,x)+\int_{\Omega}\Gamma(x,y,\tau)\mathcal{G}\Big(y,U_1(x)+\epsilon,V(t-\tau,y)\Big)dy\\
&\leq& d_2 \Delta V(t,x)-\alpha(x)V(t,x)+\int_{\omega}\Gamma(x,y,\tau)\partial_V\mathcal{G}\Big(y_1,U_1(x)+\epsilon,0\Big)V(t-\tau,y)dy.
\end{eqnarray*}
The first inequality is due to Assumption \ref{2020c-a1} (4) and the second inequality comes from Assumption \ref{2020c-a1} (5).
On the other hand, the model
\begin{equation*}
    \partial_t\overline{V}(t,x)=d_2 \Delta V(t,x)-\alpha(x) \overline{V}(t,x)+\int_{\Omega}\Gamma(x,y,\tau)\partial_V \mathcal{G}(y,U_1(x)+\epsilon,0)V(t-\tau,y)dy,
\end{equation*}
exists a principle eigenvalue $\lambda_1(U_1(x)+\epsilon)$ associated with a strictly positive eigenvector $\phi_2(x)$ by Lemma \ref{2020c-l3.2}. And $\lambda_1(U_1(x))<0$ indicates that $\lambda_1(U_1(x)+\epsilon)<0$ for small $\epsilon$. Thus, by standard comparison theorem, $0\leq \lim_{t\rightarrow+\infty}V(t,x)\leq\lim_{t\rightarrow+\infty} \overline{V}(t,x)\leq \lim_{t\rightarrow+\infty}c e^{\lambda_1(U_1(x)+\epsilon)t}\phi_2(x)=0$ where $c$ is a positive constant. Regarding $V(t,x)$ as a fixed function with $\lim_{t\rightarrow+\infty}V(t,x)=0$ for all $x\in\Omega$, one gets that
\begin{equation*}
\begin{cases}
\partial_t \hat{U}(t,x)= d_1(x)\Delta \hat{U}(t,x)+\hat{U}(t,x)\mathcal{F}(x,\hat{U}(t,x)),& x\in\Omega, t>0\\
\frac{\partial \hat{U}(t,x)}{\partial \overrightarrow{n}}+\eta_1(x) \hat{U}(t,x)=0, & x\in\partial\Omega, t>0
\end{cases}
\end{equation*}
is the asymptotically autonomous system of \eqref{2020c-eq8}. Since $\sigma_1>0$, then $\lim_{t\rightarrow+\infty} \hat{U}(t,x)=U_1(x)$ for all $x\in\Omega$. Moreover, Lemma 3.1 in \cite{RUAN1999} ensures that $U(t,x)$ will not tends to $0$ as $t\rightarrow+\infty$. Thus, by Theorem 4.1 in \cite{Thieme1993jmb}, we obtain $\lim_{t\rightarrow+\infty}U(t,x)=U_1(x)$ for all $x\in \Omega$.
\end{proof}

\begin{theorem}Assume $\lambda_1(U_1(x))>0,\ \sigma_1>0$, then the semi-flow $\Phi(t)$ is uniformly persistent (i.e. There exists $\epsilon > 0$ such that $\liminf_{t \rightarrow +\infty} U(t,x) \geq \epsilon$ and $\liminf_{t \rightarrow +\infty} V(t,x) \geq \epsilon$). Moreover, system \eqref{2020c-eq8}-\eqref{2020c-maineq3} admits at least virus therapy equilibrium solution $E_2=(U_2(x),V_2(x))$ such that $U_2(x)>0,\ V_2(x)>0$ and $U_2(x)<U_1(x)$  for all $x\in\Omega$.
\label{2020c-T3.4}
\end{theorem}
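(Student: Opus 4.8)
The plan is to apply the theory of uniform persistence for infinite-dimensional semiflows (in the spirit of Magal and Zhao, and of Smith and Thieme) to $\Phi(t)$ on $\mathbb{C}_\tau$, and then to extract the coexistence equilibrium from the interior global attractor. Put
$$X_0=\big\{\phi=(\phi_1,\phi_2)\in\mathbb{C}_\tau:\ \phi_1(0,\cdot)\not\equiv0\ \text{and}\ \phi_2(0,\cdot)\not\equiv0\big\},\qquad \partial X_0=\mathbb{C}_\tau\setminus X_0.$$
First I would check that $X_0$ is positively invariant and that solutions starting in $X_0$ are componentwise strictly positive on $\Omega$ for $t>0$, via the parabolic strong maximum principle and Assumption \ref{2020c-a1}(3)--(4). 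On the invariant face $\{\phi_1(0,\cdot)\equiv0\}$ one has $U\equiv0$ and $\partial_tV=d_2\Delta V-\alpha V$, so the orbit tends to $E_0$; on the invariant face $\{\phi_2(0,\cdot)\equiv0\}$ with $\phi_1(0,\cdot)\not\equiv0$ one has $V\equiv0$, so by Lemma \ref{2020c-L3.1} (here $\sigma_1>0$) the orbit tends to $E_1$. Consequently, for $M_\partial:=\{\phi\in\partial X_0:\Phi(t)\phi\in\partial X_0\ \forall t\ge0\}$ we get $\bigcup_{\phi\in M_\partial}\omega(\phi)\subseteq\{E_0\}\cup\{E_1\}$; both points are isolated invariant sets and their union is acyclic, since inside $M_\partial$ the only chain is $E_0\to E_1$ and there is no return ($U_1$ is globally attracting in $\{\phi_2\equiv0\}$ because $\sigma_1>0$).

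The core of the argument is the two weak-repeller conditions $W^s(E_0)\cap X_0=\emptyset$ and $W^s(E_1)\cap X_0=\emptyset$. For $E_0$: if $u_t(\phi)\to E_0$ for some $\phi\in X_0$ then eventually $U,I,V\le\epsilon$; writing $\mathcal{G}(x,U,V)=\big(\int_0^1\partial_{u_1}\mathcal{G}(x,sU,V)\,ds\big)U$ and using $\partial_{u_1}\mathcal{G}(x,u_1,0)\equiv0$ (differentiate Assumption \ref{2020c-a1}(3)) with uniform continuity, the coefficient of $U$ is $o(1)$ as $\epsilon\to0$, hence $\partial_tU\ge d_1\Delta U+(\mathcal{F}(x,0)-\epsilon')U$ with $\epsilon'\to0$; for $\epsilon$ small $\sigma_1-\epsilon'>0$, and comparison with the positive solution $c\,e^{(\sigma_1-\epsilon')t}\xi_1(x)$ forces $\max_xU(t,x)\to\infty$, a contradiction. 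For $E_1$: if $u_t(\phi)\to E_1$ then eventually $U\ge U_1-\epsilon$, $V\le\epsilon$, and $\mathcal{G}(x,U,V)=\big(\int_0^1\partial_{u_2}\mathcal{G}(x,U,sV)\,ds\big)V\ge\big(\partial_{u_2}\mathcal{G}(x,U_1(x),0)-\epsilon''\big)V$ with $\epsilon''\to0$; thus $V$ dominates a nonlocal delayed linear equation whose principal eigenvalue (Lemma \ref{2020c-l3.2}) converges to $\lambda_1(U_1(x))>0$ as $\epsilon\to0$, so $V$ grows exponentially, again a contradiction. These conditions, together with the point dissipativity and eventual compactness of $\Phi(t)$ from Theorem \ref{2020c-T1}, let the abstract persistence theorem give uniform persistence with respect to $(X_0,\partial X_0)$; a parabolic interior/boundary (Harnack) estimate then upgrades the abstract lower bound to $\liminf_{t\to\infty}U(t,x)\ge\epsilon$ and $\liminf_{t\to\infty}V(t,x)\ge\epsilon$ uniformly in $x$.

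Finally, uniform persistence plus the eventual compactness of $\Phi(t)$ yields a global attractor $\mathcal{A}_0\subset X_0$ for $\Phi(t)|_{X_0}$ (Magal and Zhao); a fixed-point argument for the compact map $\Phi(t_0)$, $t_0>\tau$, on a neighborhood of $\mathcal{A}_0$ then produces an equilibrium $E_2=(U_2,V_2)\in X_0$, and the elliptic strong maximum principle gives $U_2,V_2>0$ in $\Omega$. Since $U_2$ solves $d_1\Delta U_2+U_2\mathcal{F}(x,U_2+I_2)-\mathcal{G}(x,U_2,V_2)=0$ with $I_2\ge0$, and $\mathcal{F}(x,\cdot)$ is decreasing while $\mathcal{G}\ge0$, $U_2$ is a subsolution of \eqref{2020c-eq10}, so the solution of \eqref{2020c-eq10} with datum $U_2$ is nondecreasing in $t$ and, by Lemma \ref{2020c-L3.1}, converges to $U_1$; hence $U_2\le U_1$, and strict inequality in $\Omega$ follows by applying the strong maximum principle to $w:=U_1-U_2\ge0$, which solves a linear equation with nonnegative, not identically zero, source $\mathcal{G}(x,U_2,V_2)+U_2\big(\mathcal{F}(x,U_2)-\mathcal{F}(x,U_2+I_2)\big)$. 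I expect the main difficulty to be the weak-repeller step at $E_1$: it must combine the lower Taylor estimate for $\mathcal{G}$ near $V=0$, the continuous dependence of the principal eigenvalue of the nonlocal delayed problem \eqref{2020c-eq16} on its coefficient, and a comparison principle for a nonlocal delayed parabolic inequality --- each routine alone but requiring care together --- and it is precisely there that the hypothesis $\lambda_1(U_1(x))>0$ is used.
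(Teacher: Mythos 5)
Your proposal follows essentially the same route as the paper: the decomposition into $Y_0$ and $\partial Y_0$, the acyclic boundary dynamics covered by $\{E_0,E_1\}$, the two weak-repeller arguments (comparison with $c\,e^{(\sigma_1-\epsilon)t}\xi_1(x)$ at $E_0$, and the positivity of the principal eigenvalue of the nonlocal delayed problem \eqref{2020c-eq16} at $E_1$), and the abstract persistence-plus-coexistence-equilibrium theorem; in fact you supply more detail at the $E_1$ repeller and on upgrading abstract persistence to pointwise lower bounds than the paper, which only says ``using the similar method.'' The one genuinely different (but equally valid) step is the ordering $U_2<U_1$: the paper compares the positive equilibria of two scalar logistic-type problems with ordered per-capita growth rates $\mathcal{F}_2<\mathcal{F}$ via Proposition 3.3.3 of its cited reference, whereas you treat $U_2$ as a subsolution of \eqref{2020c-eq10}, use monotone convergence to $U_1$, and finish with the strong maximum principle.
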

\begin{proof}Let $Y_0:=\{(\phi_1,\phi_2)\in C_{\tau}:\phi_i\not\equiv0, i=1,2\}$, $\partial Y_0:=C_{\tau} \setminus Y_0$ and $S:=\{\phi\in\partial Y_0:\Phi(t)\phi\in\partial Y_0, t\geq0\}$. Lemma \ref{2020c-L3.1} indicates that $\omega(S)=\{E_0,E_1\}$ and $\omega(S)$ is acyclic. To obtain the persistence of $\Phi$, by Theorem 4.4.3 in \cite{RN4}, it is sufficient to that $E_0,\ E_1$ are isolated invariant subsets for $\Phi$ in $Y_0$ and that
$$W^S(E_0)\cap Y_0=\emptyset,$$
$$W^S(E_1)\cap Y_0=\emptyset,$$
where $W^S(E_i)$ denotes the stable manifold of equilibrium $E_i$ for $i=0,1$. Suppose
$W^S(E_0)\cap Y_0 \neq \emptyset$. Then for any $\epsilon>0$, there is positive number $t_1=t(\phi)$ depending on initial data $\phi$ such that $t>t_1$ implies that $U(t,x),\ V(t,x)\leq \epsilon$.  Choose $0<\epsilon_1\leq \sigma_1$. Then there exists $t_0>0$ and $\delta_0>0$ such that for all $t>t_0$, $x\in\overline{\Omega}$, and $U\leq \delta_0$, $V \leq \delta_0$, the following inequality holds:
$$
U(t,x) \mathcal{F}\Big(x,U(t,x)+I(t,x)\Big)-\mathcal{G}\Big(x,U(t,x),V(t,x)\Big)\geq U(t,x)[\mathcal{F}(x,0)-\epsilon_1].
$$
By substituting this inequality into the first equation of system \eqref{2020c-eq8}, we obtain:
$$\partial_t U(t,x)\geq d_1 \Delta U(t,x)+U(t,x)[\mathcal{F}(x,0)-\epsilon_1]$$
Considering the following system:
\begin{equation*}
\begin{cases}
\underline{U}(t,x)= d_1 \Delta\underline{U}(t,x)]+\underline{U}(t,x)[\mathcal{F}(x,0)-\epsilon_1],& t\geq t_0,x\in\Omega,\\
\frac{\partial \underline{U}(t,x)}{\partial\overrightarrow{n}}+\eta_1(x)\underline{U}(t,x)=0,  & t\geq t_0,x\in\partial\Omega.    
\end{cases}
\end{equation*}

Then, by comparing theorem, one gets that 
$$U(t,x)\geq \underline{U}(t,x)= c e^{(\sigma_1-\epsilon_1) t}\xi_1 (x)$$ which indicates that $U(t,x)$ will not converges to 0. It contracts with the assumption $W^S(E_0)\cap Y_0 \neq \emptyset$. Furthermore, using the similar method, we also can prove $W^S(E_1)\cap Y_0 = \emptyset$ thanks to $\lambda_1(U_1(x))>0$.  Using acyclicity test for permanence \cite{RN4}, the semi-flow $\Phi(t)$ is persistent.

Furthermore,  Theorem 4.4.6 in \cite{RN4} (or Theorem 6.3 in \cite{smith2011dynamical}) ensures that system \eqref{2020c-eq8} admits at least one equilibrium solution $(U_2(x),V_2(x))$ such that $U_2(x)>0,\ V_2(x)>0$ for all $x\in\Omega$.
Returning to the original equations \eqref{2020c-eq8}-\eqref{2020c-maineq3}, $U_2(x)$ is a positive equilibrium solution of the following equation:
\begin{equation*}
\begin{cases}
    \partial_t \hat{U}_2(t,x)= d_1\Delta \hat{U}_2(x)+\hat{U}_2(x)\mathcal{F}_2\Big(x,\hat{U}_2(x)\Big),& x\in\Omega\\
\frac{\partial \hat{U}_2(x)}{\partial \overrightarrow{n}}+\eta_1(x)\hat{U}_2(x)=0,& x\in\partial\Omega,
\end{cases}
\end{equation*}
where $\mathcal{F}_2(x,u)=\mathcal{F}\Big(x,u+I_2(x,u)\Big)-\frac{1}{u}\mathcal{G}\Big(x,u,V_2(x)\Big)$, and
$$I_2(x)=\frac{1}{\kappa}\int_0^{\tau}\int_{\Omega}\Gamma(x,y,a)\mathcal{G}\Big(y,u(y),V_2(y)\Big)dyda>0.$$
In addition, $U_1(x)$ is a positive equilibrium solution of the system \eqref{2020c-eq10}. By using Assumption \ref{2020c-a1} (1), (2), and (6), it can be concluded that $\mathcal{F}$ and $\mathcal{F}_2$ are strictly decreasing functions of $u$ for $u\geq0$, and $\mathcal{F}(x,u)>\mathcal{F}_2(x,u)$ for $u>0$. Therefore, by applying Proposition 3.3.3 in \cite{RN4}, we have $U_1(x)>U_2(x)$.

\end{proof}

\section{A simple example of the correlation between logistics growth and mass action infection.}\label{2023section5}

In this section, we investigate the specific form of model \eqref{2020c-eq8}, assuming $\alpha(x)=\alpha$, $\mathcal{F}(x,U)=b-dU$, $\mathcal{G}(x,U,V)= \frac{\beta U V}{1+h V}$, $\eta_1(x)=\eta_2(x)=0$, and $\Omega=(0,\pi)$,  where  $\alpha, \beta, d, h $ are positive constants, and $b$ is a non-negative constant.

Then, the model equations \eqref{2020c-eq8} tend to the following forms:
\begin{equation}
   \begin{cases}
 \partial_t U(t, x)&=d_1 \Delta U(t, x)-\frac{\beta U(t, x) V(t, x)}{1+h V(t, x)}+U(t, x) \Big [b-d U(t, x) \\
 &- \frac{\beta}{\kappa} \int_0^\tau \int_0^\pi \Gamma(x, y, a)\frac{ U(t-a, y) V(t-a, y)}{1+h V(t-a, y)} d y d a\Big ], \\
 \partial_t V(t, x)&= d_2 \Delta V(t, x)-\alpha  V(t, x)+\beta \int_0^\pi \Gamma(x, y, \tau ) \frac{U(t-\tau, y) V(t-\tau, y)}{1+h V(t-\tau, y)} d y.
\end{cases} \label{2023eq4.1}
\end{equation}
with following homogeneous Neumann boundary conditions:
\begin{equation}
    \partial_x U(t, 0)  =\partial_x U(t, \pi)=\partial_x V(t, 0)=\partial_x V(t, \pi)=0, \qquad t>0,\label{2023eq4.2}
\end{equation}
and initial conditions are given by:
\begin{equation}
    U(0,x) = U_0(x),V(0,x)=V_0(x), \qquad x\in[0,\pi].   \label{2023eq4.3}
\end{equation}
The function $\Gamma(x,y,a)$ in model \eqref{2023eq4.1} is the Green's function of the following equation:
\begin{equation}
   \begin{cases}
\partial_a W(a, x)=d_2 \Delta W- \alpha W, & x\in(0,\pi), t>0, \\
W(0, x)  =W_0(x)& x\in(0,\pi), \\
\partial_x W(t, 0)  =\partial_x W(t, \pi)=0, &  t>0.\label{2023eq4.4}
\end{cases} 
\end{equation}

By using the method of separation of variables, we know that the solution of the equation \eqref{2023eq4.4} is:
\begin{equation}
\begin{aligned}
& W(a, x)=\frac{2}{\pi} \sum_{n=1}^{\infty} \Big [\int_0^\pi W_0(y) \cos n y d y \Big ] \cos (n x) e^{-\left(n^2 d_2+\alpha\right) a} \\
& =\frac{2}{\pi} \int_0^\pi \sum_{n=1}^{\infty} \Big [e^{-\left(n^2 d_2+\alpha\right) a} \cos n y \cos n x \Big ] W_0(y) d y \\
& =\int_0^\pi \Gamma(x, y, a) W_0(y) d y \\
\end{aligned}\label{2023eq4.5}
\end{equation}
where $\Gamma(x, y, a)=\frac{2}{\pi} \sum_{n=1}^{\infty} e^{-\left(n^2 d_2+\alpha\right)a} \cos n y \cos n x$ is the Green function of system \eqref{2023eq4.4}.

In addition, in the special case when $W_0(y)=1$, we know that the solution of equation \eqref{2023eq4.4} is $W(a,x)= e^{-\alpha a}$. Therefore, by utilizing the uniqueness of the solution and equation \eqref{2023eq4.5}, we can deduce that $\int_0^\pi \Gamma(x, y, a) d y=e^{-\alpha a}$. By performing a simple computation, we can thus obtain the following conclusion.

$\frac{2}{\pi} \int_0^\pi \sum_{n=1}^{\infty} \Big [e^{-\left(n^2 d_2+\alpha\right) a} \cos n y \Big ]d y $

 \begin{theorem}
The constant equilibria of the system \eqref{2023eq4.1} - \eqref{2023eq4.3} can be described as follows:
\begin{enumerate}
    \item When $b \leq 0$, the model has a unique steady state  $E_0=(0,0)$;\label{2023T511}
    \item When $b > 0$ and $\frac{b \beta}{\alpha d e^{\alpha \tau}}\leq 1$, the system \eqref{2023eq4.1} - \eqref{2023eq4.3} has a original steady state $E_0=(0,0)$ and an untreated steady state $E_1=\left(\frac{b}{d}, 0\right)$;\label{2023T512}
    \item When $\frac{b \beta}{\alpha d e^{\alpha \tau}}>1$, besides the original steady state $E_0=(0,0)$ and the untreated steady state $E_1=\left(\frac{b}{d}, 0\right)$, the system \eqref{2023eq4.1} - \eqref{2023eq4.3} also has a unique positive treated steady state $E_3=\left(U_3, V_3\right)$, where $U_3$ and $V_3$ are both positive.\label{2023T513}
\end{enumerate}
\end{theorem}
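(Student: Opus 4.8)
The plan is to look for equilibria that are constant in both $t$ and $x$, say $(U,V)\equiv(U^\ast,V^\ast)$ with $U^\ast,V^\ast\ge 0$. Since $\Delta$ annihilates constants and, by the identity established right after \eqref{2023eq4.5}, $\int_0^\pi\Gamma(x,y,a)\,dy=e^{-\alpha a}$, every nonlocal term collapses: $\int_0^\pi\Gamma(x,y,\tau)\tfrac{U^\ast V^\ast}{1+hV^\ast}\,dy=e^{-\alpha\tau}\tfrac{U^\ast V^\ast}{1+hV^\ast}$ and $\int_0^\tau\!\!\int_0^\pi\Gamma(x,y,a)\tfrac{U^\ast V^\ast}{1+hV^\ast}\,dy\,da=\tfrac{1-e^{-\alpha\tau}}{\alpha}\cdot\tfrac{U^\ast V^\ast}{1+hV^\ast}$. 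Hence \eqref{2023eq4.1} reduces to the algebraic pair
\[
U^\ast\Big[b-dU^\ast-\tfrac{\beta(1-e^{-\alpha\tau})}{\alpha\kappa}\,\tfrac{U^\ast V^\ast}{1+hV^\ast}\Big]=\tfrac{\beta U^\ast V^\ast}{1+hV^\ast},\qquad \alpha V^\ast=\tfrac{\beta e^{-\alpha\tau}U^\ast V^\ast}{1+hV^\ast}.
\]

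First I would handle the degenerate cases. If $U^\ast=0$, the second equation forces $V^\ast=0$, so $E_0=(0,0)$; conversely $(0,0)$ always solves the pair, which gives the existence of $E_0$ in all three items. If $U^\ast>0$ and $V^\ast=0$, the first equation becomes $b-dU^\ast=0$, hence $U^\ast=b/d$, a genuine positive steady state exactly when $b>0$; this produces $E_1=(b/d,0)$ in items (2) and (3). For item (1), when $b\le 0$ the value $b/d$ is not admissible, and for $U^\ast,V^\ast>0$ the right-hand side of the first equation is positive while the left-hand side is $\le U^\ast(b-dU^\ast)<0$, so no other equilibrium exists; only $E_0$ survives.

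The core is the positive branch $U^\ast,V^\ast>0$. Here the second equation gives $\tfrac{\beta U^\ast}{1+hV^\ast}=\alpha e^{\alpha\tau}$, i.e. $U^\ast=\tfrac{\alpha e^{\alpha\tau}}{\beta}(1+hV^\ast)$; substituting this together with $\tfrac{\beta U^\ast V^\ast}{1+hV^\ast}=\alpha e^{\alpha\tau}V^\ast$ into the first equation and clearing denominators turns it into a single quadratic $g(w)=0$ in $w:=V^\ast>0$. I would compute its coefficients and record three facts: the leading coefficient equals $-\tfrac{\alpha e^{\alpha\tau}}{\beta}h\big(\tfrac{d\alpha e^{\alpha\tau}}{\beta}h+\tfrac{e^{\alpha\tau}-1}{\kappa}\big)<0$; the constant term equals $\tfrac{\alpha e^{\alpha\tau}}{\beta}\big(b-\tfrac{d\alpha e^{\alpha\tau}}{\beta}\big)=\tfrac{\alpha e^{\alpha\tau}}{\beta}\,b\,\tfrac{R_0-1}{R_0}$ with $R_0:=\tfrac{b\beta}{\alpha d e^{\alpha\tau}}$ the threshold of the statement; and, when $R_0\le 1$, the linear coefficient is also strictly negative because then $b-2\tfrac{d\alpha e^{\alpha\tau}}{\beta}=b(1-2/R_0)<0$ while $e^{\alpha\tau}\ge 1$. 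If $R_0>1$ the constant term is positive, so the downward parabola $g$ has roots of opposite sign (their product is negative), hence exactly one positive root $w=V_3$, and $U_3=\tfrac{\alpha e^{\alpha\tau}}{\beta}(1+hV_3)>0$; this is the unique $E_3$. If $R_0\le 1$ all three coefficients are $\le 0$ (with the leading and linear ones $<0$), so $g(w)<0$ for every $w>0$ and there is no positive equilibrium, completing items (2) and (3).

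The main obstacle is purely computational: expanding the first stationary equation after the substitution $U^\ast=\tfrac{\alpha e^{\alpha\tau}}{\beta}(1+hV^\ast)$ and reading off the sign of each coefficient of the resulting quadratic in terms of the single dimensionless group $R_0$. The constant term immediately carries the bifurcation at $R_0=1$; the only delicate point is verifying that in the subcritical regime $R_0\le 1$ the linear coefficient stays negative, which is exactly what prevents the parabola from rising above zero and thus rules out a pair of positive roots.
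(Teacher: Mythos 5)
Your proposal is correct and follows essentially the same route as the paper: reduce to the spatially constant algebraic system via $\int_0^\pi\Gamma(x,y,a)\,dy=e^{-\alpha a}$, dispose of the boundary cases $U^\ast=0$ and $V^\ast=0$, and for the positive branch substitute $U^\ast=\tfrac{\alpha e^{\alpha\tau}}{\beta}(1+hV^\ast)$ to obtain a quadratic in $V^\ast$ with negative leading coefficient whose constant term changes sign exactly at $\tfrac{b\beta}{\alpha d e^{\alpha\tau}}=1$, concluding by Vieta. (Your reduction correctly carries the factor $\tfrac{1-e^{-\alpha\tau}}{\alpha\kappa}$ from $\int_0^\tau e^{-\alpha a}\,da$, where the paper's displayed stationary system drops the $1/\alpha$; this is a typo there and does not affect the sign analysis.)
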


\begin{proof}
To study the constant steady state, we consider the following system:
$$
\left\{\begin{array}{l}
-\frac{\beta U V}{1+h V}+ U\left[b-d U-\frac{\beta}{\kappa}\left(1-e^{-\alpha \tau}\right) \frac{U V}{1+h V}\right]=0, \\
-\alpha V+\beta e^{-\alpha \tau} \frac{U V}{1+h V}=0.
\end{array}\right.
$$

When $V=0$, it is easy to obtain conclusion \ref{2023T511} and conclusion \ref{2023T512}. When $V \neq 0$, we have $U=\frac{\alpha e^{\alpha \tau}}{\beta}(1+h V)$, where $V$ satisfies the quadratic equation.
\begin{equation}
    A V^2+B V+C=0.\label{2023eq4.6}
\end{equation}
where
\begin{equation*}
\begin{aligned}
    &A=-d{\alpha}^{2} \left( {{\rm e}^{\alpha\,\tau}} \right) ^{2}{h}^{2}-{
\frac {\beta\,{\alpha}^{2} {{\rm e}^{\alpha\,\tau}}  h}{\kappa}}(e^{\alpha \tau}-1)
,\\
&B=\alpha e^{\alpha \tau}\left[-\beta^2+\beta bh-2 d \alpha e^{\alpha \tau} h-\frac{\alpha \beta}{\kappa}\left(e^{\alpha \tau}-1\right)\right],\\
&C= \alpha (\beta b-d \alpha e^{\alpha \tau}).
\end{aligned}
\end{equation*}
It is easy to check that $A<0$. Hence, in the case where $C>0$ (i.e., when $\frac{b \beta}{\alpha d e^{\alpha \tau}}>1$), equation \eqref{2023eq4.6} possesses a single positive solution, which can be obtained using the Vieta's formula. Please refer to Figure \ref{fig:square} for a visual representation of this scenario.
\begin{figure}
  \centering
  \includegraphics[width=0.5\textwidth]{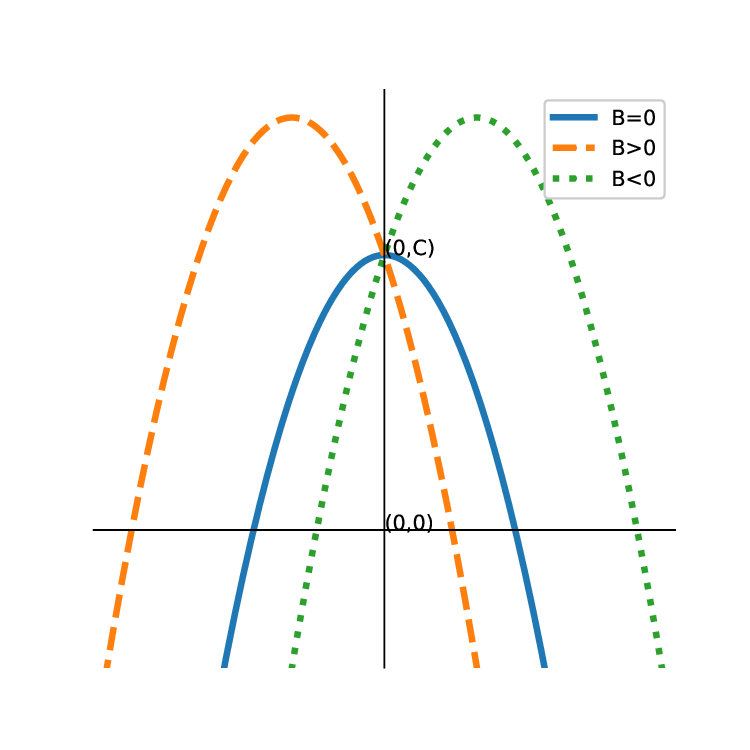}
  \caption{This figure shows the quadratic curves for all fixed $A<0$ and $C<0$. We use dashed lines to represent the case of $B<0$, dotted lines to represent the case of $B>0$, and solid lines to represent the quadratic curves with $B=0$.}
  \label{fig:square}
\end{figure}

Conversely, when $\frac{b \beta}{\alpha d e^{\alpha \tau}}\leq1$, it implies that $B< 0$ and $C\leq 0$. Applying the Vieta's formula once more, it can be readily deduced that the equation \eqref{2023eq4.6} lacks a positive solution.

\end{proof}

In addition, when $V(t,x)=0$, it follows that $U(t,x)$ satisfies:
\begin{equation}
\begin{cases}
\partial_t U(t, x)=d_1 U(t, x)+U(t, x)\Big [b-d U(t, x)\Big ], & x \in(0,1), t>0, \\ \partial_x U(t, 0)=\partial_x U(t, \pi)=0 . & t>0.
\end{cases}\label{2023eq4.7}
\end{equation}
Linearizing the above system \eqref{2023eq4.7} at 0, we obtain the elliptic eigenvalue problem:
\begin{equation*}
\left\{\begin{array}{l}
\sigma \psi=d_1 \Delta \psi+b \psi, \quad x \in(0, \pi) \\
\partial_x \psi(0)=\partial_x \psi(\pi)=0
\end{array}\right.
\end{equation*}
It is easy to check that the principle eigenvalue $\sigma_1 = b$ and the corresponding eigenfunction $\psi=1$. By Lemma \ref{2020c-L3.1}, we have the following Lemma.

\begin{lemma}
If $b>0$, then the system \eqref{2023eq4.7} possesses a unique positive steady state $U_1(x)=\frac{b}{d}$, such that for every $\phi\in \mathbb{C}_{\tau}$, $\lim_{t\rightarrow +\infty} U(x, t, \phi)=\frac{b}{d}$ uniformly for $x\in [0,\pi]$. Whereas, $\lim_{t\rightarrow +\infty} U(x, t, \phi)=0$ uniformly for $x\in [0,\pi]$ as $b<0$.\label{2023L5.1}
\end{lemma}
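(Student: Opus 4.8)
The plan is to reduce the final Lemma to a direct application of Lemma~\ref{2020c-L3.1} by identifying the relevant principal eigenvalue and positive equilibrium explicitly. The system \eqref{2023eq4.7} is exactly \eqref{2020c-eq10} in the special case $d_1$ fixed, $\mathcal{F}(x,U)=b-dU$, and $\eta_1(x)\equiv0$ (homogeneous Neumann). So the only thing one must check is that Assumption~\ref{2020c-a1}(1)--(2) holds for this $\mathcal{F}$ (take $K_0>b/d$, so $\mathcal{F}(x,K_0)=b-dK_0<0$ and $\mathcal{F}(x,0)=b>0$ when $b>0$; and $\mathcal{F}(x,\cdot)$ is strictly decreasing since $d>0$), and then to compute the principal eigenvalue $\sigma_1$ of the linearized problem at $0$.

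First I would note that the linearization of \eqref{2023eq4.7} at the zero steady state is the eigenvalue problem $\sigma\psi=d_1\Delta\psi+b\psi$ on $(0,\pi)$ with Neumann boundary conditions, as displayed just above the Lemma. Testing $\psi\equiv1$ gives $\Delta\psi=0$ and hence $\sigma=b$; since the constant function is strictly positive, it is the principal eigenfunction, so $\sigma_1=b$. (More precisely, the eigenvalues are $b-n^2d_1$ for $n=0,1,2,\dots$, and the largest is $b$, attained at $n=0$ with eigenfunction $1$.)

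Next, with $\sigma_1=b$ in hand, I would invoke Lemma~\ref{2020c-L3.1} directly. If $b>0$ then $\sigma_1>0$, so \eqref{2023eq4.7} has a unique positive equilibrium $z_*(x)$ with $z_*(x)\le K_0$, and every solution with initial data in $\mathbb{C}_\tau$ converges to $z_*$ uniformly on $[0,\pi]$. To identify $z_*$ explicitly, observe that the constant function $U\equiv b/d$ solves $d_1\Delta U+U(b-dU)=0$ with Neumann conditions and is positive; by the uniqueness clause of Lemma~\ref{2020c-L3.1}, $z_*(x)\equiv b/d$. If instead $b<0$ then $\sigma_1=b<0$, and Lemma~\ref{2020c-L3.1} gives that all non-negative solutions decay exponentially to zero, uniformly on $[0,\pi]$.

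This argument has essentially no obstacle: the entire content is the eigenvalue computation $\sigma_1=b$ (immediate from testing against the constant) and the verification that $b/d$ is the claimed equilibrium (immediate from substitution plus uniqueness). The only minor point requiring a word of care is confirming $\mathcal{F}(x,U)=b-dU$ genuinely satisfies Assumption~\ref{2020c-a1} so that Lemma~\ref{2020c-L3.1} applies—but conditions (1), (2) are checked above, and (3)--(6) concern $\mathcal{G}$ only and are irrelevant here. Thus the Lemma follows.
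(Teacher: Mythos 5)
Your proposal is correct and follows essentially the same route as the paper: the paper likewise linearizes \eqref{2023eq4.7} at zero, reads off the principal eigenvalue $\sigma_1=b$ with constant eigenfunction, and then cites Lemma~\ref{2020c-L3.1} to conclude. Your additional details (verifying Assumption~\ref{2020c-a1}(1)--(2) for $\mathcal{F}=b-dU$ and identifying $z_*\equiv b/d$ via the uniqueness clause) are exactly the steps the paper leaves implicit.
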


Now, we consider the following non-local elliptic eigenvalue problem:
\begin{equation}
\left\{\begin{array}{l}
s \psi=d_2 \Delta \psi-\alpha \psi+e^{-s \tau}\frac{b \beta}{d } \int_{0}^{\pi} \Gamma(x, y, \tau) \psi(y) d y, \quad x \in(0, \pi) \\
\partial_x \psi(0)=\partial_x \psi(\pi)=0 .
\end{array}\right.\label{2023eq4.8} 
\end{equation}

\begin{lemma}
    The principle eigenvalue of system \eqref{2023eq4.8} has the same sign as $\lambda_1=-\alpha +\frac{\beta b}{d e^{\alpha\tau}}$.\label{2023L5.2}
\end{lemma}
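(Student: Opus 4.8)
The plan is to reduce the nonlocal eigenvalue problem \eqref{2023eq4.8} to a scalar transcendental equation, using the fact that under homogeneous Neumann conditions on $\Omega=(0,\pi)$ the constant function is the principal eigenfunction. First I would test $\psi\equiv1$ in \eqref{2023eq4.8}: since the identity $\int_0^\pi\Gamma(x,y,a)\,dy=e^{-\alpha a}$ derived above gives $\int_0^\pi\Gamma(x,y,\tau)\,dy=e^{-\alpha\tau}$, the equation collapses to $s=-\alpha+\frac{b\beta}{d}e^{-\alpha\tau}e^{-s\tau}$, i.e. $\Phi(s):=s+\alpha-\frac{b\beta}{d}e^{-\alpha\tau}e^{-s\tau}=0$. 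Because $b\ge0$, $\Phi$ is strictly increasing on $\mathbb{R}$ with $\Phi(-\infty)=-\infty$ and $\Phi(+\infty)=+\infty$, so it has exactly one real root $s^\star$, and $\psi\equiv1>0$ is a strictly positive eigenfunction associated with $s^\star$.

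Next I would identify $s^\star$ with the principal eigenvalue $s_1$ furnished by Lemma \ref{2020c-l3.2}. The cleanest route is to diagonalize in the cosine basis $\{\cos nx\}_{n\ge0}$, which simultaneously diagonalizes $d_2\Delta$ (with Neumann conditions) and the convolution operator $\psi\mapsto\int_0^\pi\Gamma(\cdot,y,\tau)\psi(y)\,dy$; using the spectral expansion of $\Gamma$ together with orthogonality one obtains $\int_0^\pi\Gamma(x,y,\tau)\cos ny\,dy=e^{-(n^2d_2+\alpha)\tau}\cos nx$, so $\psi=\cos nx$ solves \eqref{2023eq4.8} with $s=s^{(n)}$, where $s^{(n)}$ is the unique real root of $s=-n^2d_2-\alpha+\frac{b\beta}{d}e^{-(n^2d_2+\alpha)\tau}e^{-s\tau}$. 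For $n\ge1$ the right-hand side is pointwise strictly smaller than the one for $n=0$ (both because of the term $-n^2d_2$ and because $e^{-n^2d_2\tau}\le1$), hence $s^{(n)}<s^{(0)}=s^\star$ for every $n\ge1$; since the principal eigenvalue is the largest eigenvalue, $s_1=s^\star$.

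Finally I would read off the sign. Evaluating $\Phi$ at $0$ gives $\Phi(0)=\alpha-\frac{b\beta}{d}e^{-\alpha\tau}=-\lambda_1$, where $\lambda_1=-\alpha+\frac{\beta b}{d e^{\alpha\tau}}$. Since $\Phi$ is strictly increasing and $\Phi(s_1)=0$, we get $s_1>0\iff\Phi(0)<0\iff\lambda_1>0$, and in the same way $s_1=0\iff\lambda_1=0$ and $s_1<0\iff\lambda_1<0$; therefore $s_1$ and $\lambda_1$ have the same sign.

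I expect the only genuine obstacle to be the second step: confirming that no non-constant Fourier mode produces an eigenvalue exceeding $s^\star$, so that $s^\star$ is truly the principal eigenvalue and not merely an eigenvalue. This is settled by the monotonicity of the modal roots $s^{(n)}$ in $n$, which rests on the explicit decay factors $e^{-(n^2d_2+\alpha)\tau}$ in the spectral form of $\Gamma$. If one prefers to avoid the explicit series, the same conclusion follows from the Krein--Rutman characterization of the principal eigenvalue as the unique eigenvalue admitting a strictly positive eigenfunction, a role already played by $\psi\equiv1$.
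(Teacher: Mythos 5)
Your proof is correct, but it takes a genuinely different route from the paper's. The paper leans on the cited result (Theorem 2.2 of \cite{RN111}) twice: once for the existence of the principal eigenvalue $s_1$ of \eqref{2023eq4.8}, and once for the general fact that $s_1$ shares its sign with the principal eigenvalue of the \emph{undelayed} problem \eqref{2023eq4.9}; it then computes that latter eigenvalue as $-\alpha+\frac{\beta b}{d e^{\alpha\tau}}$ by integrating the eigenvalue equation over $(0,\pi)$, exchanging the order of integration, and using the symmetry of $\Gamma$ together with $\int_0^\pi\Gamma(x,y,\tau)\,dy=e^{-\alpha\tau}$. You instead work directly on the delayed problem: you exhibit $\psi\equiv1$ as a strictly positive eigenfunction, collapse \eqref{2023eq4.8} to the scalar characteristic equation $\Phi(s)=s+\alpha-\frac{b\beta}{d}e^{-\alpha\tau}e^{-s\tau}=0$, identify its unique real root with $s_1$ (via the cosine-mode comparison, or more robustly via the uniqueness of the eigenvalue admitting a positive eigenfunction), and read off the sign from $\Phi(0)=-\lambda_1$ and the strict monotonicity of $\Phi$. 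Your route is more self-contained --- it avoids the delayed-to-undelayed sign-transfer lemma entirely --- and yields strictly more information (the exact transcendental equation for $s_1$ and the ordering $s^{(n)}<s^{(0)}$ of all modal roots), at the price of being tied to the constant-coefficient Neumann setting on an interval, which is precisely the setting of Section \ref{2023section5}, so nothing is lost here. Two minor caveats: the monotonicity of $\Phi$ and the modal comparison require $b\ge0$, which is harmless since the lemma is only invoked when $b>0$ (so that $U_1=b/d$ is a genuine steady state); and your Fourier computation $\int_0^\pi\Gamma(x,y,\tau)\cos ny\,dy=e^{-(n^2d_2+\alpha)\tau}\cos nx$ implicitly uses the corrected Neumann Green's function including the $n=0$ term $\tfrac{1}{\pi}e^{-\alpha a}$, which the paper's displayed series for $\Gamma$ omits but which is needed for the identity $\int_0^\pi\Gamma(x,y,a)\,dy=e^{-\alpha a}$ that both you and the paper rely on.
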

\begin{proof}
According to Theorem 2.2 in \cite{RN111} (or Lemma 2.4 in \cite{RN232}), system \eqref{2023eq4.8} possesses a principal eigenvalue $s_1$ along with a strictly positive eigenfunction $\psi_1$. Moreover, the principal eigenvalue $s_1$ shares the same sign as ${\lambda_1}$, where ${\lambda_1}$ represents the principal eigenvalue of the following systems:
\begin{equation}
\left\{\begin{array}{l}
\lambda \psi=d_2 \Delta \psi-\alpha \psi+\frac{b \beta}{d } \int_{0}^{\pi} \Gamma(x, y, \tau) \psi(y) d y, \quad x \in(0, \pi) \\
\partial_x \psi(0)=\partial_x \psi(\pi)=0 .
\end{array}\right.\label{2023eq4.9} 
\end{equation}
To estimate ${\lambda}_1$, we integrate the first equation of system \eqref{2023eq4.9}, then 
\begin{equation}
  \begin{aligned}
{\lambda} \overline{\psi} &= -\alpha \overline{\psi} +\frac{b\beta}{d}\int_0^{\pi}\int_0^{\pi}\Gamma(x,y,\tau)\psi(y)dydx\\
&=-\alpha \overline{\psi}+\frac{b\beta}{d}\int_0^{\pi}\Big [\int_0^{\pi}\Gamma(x,y,\tau)dx \Big ]\psi(y)dy\\
&= -\alpha \overline{\psi}+\frac{b\beta}{de^{\alpha\tau}}\overline{\psi}
  \end{aligned}\label{2023eq4.10}
\end{equation}
where $\overline{\psi} := \int_0^{\pi} \psi(x)dx$. The second equal sign in equality \eqref{2023eq4.10} arises from exchanging the integrals, and the third equal sign in equality \eqref{2023eq4.10} stems from the symmetry of the Green's function and $\int_0^{\pi}\Gamma(x,y,a)dy=e^{-\alpha a}$. 

Due to the fact that the principal characteristic vector $\psi_1$ of system \eqref{2023eq4.9} is strictly positive, it follows that $\overline{\psi_1}\neq 0$. Hence, equation \eqref{2023eq4.10} implies that the principal eigenvalue $\lambda_1$ of system \eqref{2023eq4.9} is given by $\lambda_1 = -\alpha +\frac{\beta b}{d e^{\alpha\tau}}$.
\end{proof}

\begin{theorem} The global dynamic behavior of the model \eqref{2023eq4.1}-\eqref{2023eq4.3} is described as follows:
\begin{enumerate}
    \item If $b<0$, original steady state $E_0$ is globally stable;\label{conclusion1}
    \item If $b>0$ and $\frac{\beta b}{\alpha d e^{\alpha\tau}}<1$, then tumor steady state $E_1$ is global attractive in $\mathbb{C}_{\tau}$;\label{conclusion2}
    \item If $b>0$ and $\frac{\beta b}{\alpha d e^{\alpha\tau}}>1$, the solution semigroup $\Phi(t)$ generated by system \eqref{2023eq4.1}-\eqref{2023eq4.3} is uniformly persistent. Furthermore, when $h>\max\{\frac{\beta b-\alpha d e^{\alpha\tau}}{b^2},\frac{\beta \kappa +e^{\alpha\tau}-1}{b\kappa} \}$, we have
    \begin{equation*}
        \begin{aligned}
            \lim_{t\rightarrow\infty} U(t,x)&\geq \frac{\alpha \kappa (b^2 h -\beta b+\alpha d e^{\alpha\tau})}{\alpha \kappa b d h+(1-e^{-\alpha\tau})(\beta b -\alpha d e^{\alpha\tau})},\\
            \lim_{t\rightarrow\infty}V(t,x)&\geq \frac{(\beta b-\alpha d e^{\alpha\tau})(hb\kappa-\beta \kappa -e^{-\alpha \tau}+1)}{h e^{\alpha\tau}\Big [\alpha \kappa b d h+(1-e^{-\alpha\tau})(\beta b-\alpha d e^{\alpha\tau})\Big ]}.
        \end{aligned}
    \end{equation*}\label{conclusion3}
\end{enumerate}\label{2023T5.2}
\end{theorem}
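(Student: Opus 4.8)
\emph{Proof proposal.}
The plan is to obtain (1), (2) and the persistence half of (3) directly from the abstract theory of Sections~\ref{2023section3}--\ref{2023section4}, and then to derive the explicit envelopes in (3) by a chain of scalar comparison arguments. For the first part I would invoke that $\sigma_1=b$ with positive eigenfunction $\psi\equiv 1$ (computed just before Lemma~\ref{2023L5.1}) and that, by Lemma~\ref{2023L5.2}, the principal eigenvalue of the linearisation at $E_1$ has the sign of $\lambda_1=-\alpha+\beta b/(d e^{\alpha\tau})$. Then $b<0$ forces $\sigma_1<0$, so Theorem~\ref{2020c-T3.2} gives $u(t,\phi)\to(0,0)$ uniformly; since the linearisation of \eqref{2023eq4.1} at $E_0$ decouples into $d_1\Delta+b$ and $d_2\Delta-\alpha$ (the coupling vanishes as $\partial_V\mathcal G(x,0,0)=0$), $E_0$ is also locally asymptotically stable, hence globally stable. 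If $b>0$ then $\sigma_1>0$ and $E_1=(b/d,0)$ exists; $\beta b/(\alpha d e^{\alpha\tau})<1$ is exactly $\lambda_1<0$, so Theorem~\ref{2020c-T3.3} yields (2), and $\beta b/(\alpha d e^{\alpha\tau})>1$ is $\lambda_1>0$, so Theorem~\ref{2020c-T3.4} yields uniform persistence of $\Phi(t)$ together with a positive equilibrium, which is $E_3$ by the first theorem of this section.

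For the explicit bounds in (3) (reading ``$\lim$'' as ``$\liminf$'', uniformly in $x$), I would run a chain of scalar comparisons, each reduced to a spatially homogeneous (possibly delayed) auxiliary equation by means of the identity $\int_0^{\pi}\Gamma(x,y,a)\,dy=e^{-\alpha a}$ and the monotonicity and concavity of $v\mapsto v/(1+hv)$. First, $\partial_tU\le d_1\Delta U+U(b-dU)$ and Lemma~\ref{2023L5.1} give $\limsup_{t\to\infty}\sup_x U\le b/d$. Next, bounding the incidence nonlinearity above by $(b/d+\varepsilon)\,V/(1+hV)$ and comparing the $V$-equation with $\dot W=-\alpha W+\beta e^{-\alpha\tau}(b/d+\varepsilon)\,W/(1+hW)$, whose positive equilibrium $\bar V$ satisfies $1+h\bar V=\beta b/(\alpha d e^{\alpha\tau})$, gives $\limsup_{t\to\infty}\sup_x V\le\bar V$. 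Then I would insert these upper bounds and a majorant for the non-local delay term into the $U$-equation, obtaining $\partial_t U\ge d_1\Delta U+U(\tilde b-dU)$ for a constant $\tilde b$ whose positivity is precisely the hypothesis $h>(\beta b-\alpha d e^{\alpha\tau})/b^2$; Lemma~\ref{2023L5.1} then gives the stated lower bound for $U$. Finally, feeding that lower bound into the $V$-equation and comparing, at the level of spatially constant sub-solutions, with $\dot W=-\alpha W+\beta e^{-\alpha\tau}(\liminf U)\,W/(1+hW)$ gives $\alpha(1+h\liminf V)\ge\beta e^{-\alpha\tau}\liminf U$, and the hypothesis $h>(\beta\kappa+e^{\alpha\tau}-1)/(b\kappa)$ is exactly what makes the resulting lower bound for $V$ positive.

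The hard part will be the last two steps. Two issues must be handled. First, the heuristic of ``evaluating the equation at an asymptotic spatial extremum'' has to be made rigorous for a non-local, delayed reaction--diffusion problem; I would do this by replacing every non-local term by its $\Gamma$-average and every nonlinearity by a monotone one-variable majorant or minorant, so that the comparison functions are spatially constant and the comparison principle underlying Theorem~\ref{2020c-T1} applies verbatim, with no genuine fluctuation lemma in $x$ required. Second, and this is where essentially all the computation lies, one must verify that the rational expressions generated by these comparisons collapse to exactly the closed forms written in the statement and that the two thresholds on $h$ coincide with the positivity conditions of the two envelopes; it will probably be necessary to iterate the $U$- and $V$-estimates --- each round using the freshly obtained lower bound on $V$ to lower $\limsup U$, and conversely --- and to pass to the limit of the resulting monotone iteration rather than stopping after one pass.
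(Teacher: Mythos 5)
Your treatment of conclusions (1), (2) and the persistence half of (3) matches the paper: one computes $\sigma_1=b$ with eigenfunction $\psi\equiv 1$, identifies via Lemma \ref{2023L5.2} the sign of the principal eigenvalue at $E_1$ with that of $\lambda_1=-\alpha+\beta b/(d e^{\alpha\tau})$, and cites Theorems \ref{2020c-T3.2}, \ref{2020c-T3.3} and \ref{2020c-T3.4}. The gap is in the quantitative part of (3). Your first two comparisons do reproduce the paper's bounds $\mathbb{U}^{\infty}\le b/d$ and $\mathbb{V}^{\infty}\le\frac{1}{h}\big(\frac{\beta b}{\alpha d e^{\alpha\tau}}-1\big)$, but the third step, as you set it up, does not yield the stated constant. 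Writing $P=\beta\mathbb{V}^{\infty}/(1+h\mathbb{V}^{\infty})=(\beta b-\alpha d e^{\alpha\tau})/(hb)$ and $Q=(1-e^{-\alpha\tau})/(\alpha\kappa)$, your minorant for the $U$-equation replaces $U$ inside the non-local depletion term by the frozen majorant $b/d$, which gives $\liminf U\ge\big(b-P-\tfrac{b}{d}PQ\big)/d$. This is strictly weaker than the stated bound $(b-P)/(d+PQ)=\frac{\alpha\kappa(b^2h-\beta b+\alpha d e^{\alpha\tau})}{\alpha\kappa bdh+(1-e^{-\alpha\tau})(\beta b-\alpha d e^{\alpha\tau})}$, can be negative when the latter is positive (take $P=Q=d=1$, $b=3$), and its positivity is \emph{not} equivalent to $h>(\beta b-\alpha d e^{\alpha\tau})/b^2$, contrary to what you assert. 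The same defect propagates to your lower bound for $V$.

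The missing idea is that the unknown $\mathbb{U}_{\infty}:=\inf_x\liminf_t U$ must be kept \emph{inside} the non-local term, producing the self-referential affine inequality $b-d\,\mathbb{U}_{\infty}\le P\,(1+Q\,\mathbb{U}_{\infty})$, whose solution is exactly $(b-P)/(d+PQ)$. The paper obtains this not by pointwise PDE comparison against a spatially homogeneous majorant but by rewriting $U$ and $V$ through variation-of-constants formulas with the Green's functions $\Gamma_1$ and $\Gamma$ (after adding $cU$ to make the reaction monotone), applying Fatou's lemma to both $\limsup$ and $\liminf$, and using $\int_0^{\pi}\Gamma_1\,dy=1$ and $\int_0^{\pi}\Gamma\,dy=e^{-\alpha a}$ to collapse everything onto the four scalars $\mathbb{U}^{\infty},\mathbb{U}_{\infty},\mathbb{V}^{\infty},\mathbb{V}_{\infty}$; this yields the coupled inequalities \eqref{2023eq4.14}--\eqref{2023eq4.17}, which are then chained in the order $\mathbb{U}^{\infty}\to\mathbb{V}^{\infty}\to\mathbb{U}_{\infty}\to\mathbb{V}_{\infty}$. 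Your closing remark about iterating and passing to the limit points in the right direction, since the stated constant is indeed the fixed point of $u\mapsto(b-P-PQu)/d$, but you neither arrange for the lower bound to feed back into its own non-local term nor verify convergence of the iteration; as written, your argument proves weaker envelopes under stronger hypotheses on $h$ than those claimed.
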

\begin{proof}
Conclusion \ref{conclusion1} arises from Theorem \ref{2020c-T3.2} and Lemma \ref{2023L5.1}; Conclusion \ref{conclusion2} stems from Theorem \ref{2020c-T3.3}, Lemma \ref{2023L5.1}, and Lemma \ref{2023L5.2}.By combining Theorem \ref{2020c-T3.4}, Lemma \ref{2023L5.1}, and Lemma \ref{2023L5.2}, we can derive the first part of Conclusion \ref{conclusion3}.

Now, we will prove the second part of Conclusion \ref{conclusion3}. Based on Theorem  \ref{2020c-T1}, it is known that the solution semigroup $\Phi(t)$ generated by system \eqref{2023eq4.1}-\eqref{2023eq4.3} is bounded. Therefore, we can choose $c>0$ such that $c U - \beta U V + U [b - d U-\frac{\beta}{\alpha \kappa}(1-e^{-\alpha \tau}) U V ]$ is monotone increasing in $U$ for all values taken by the solution. Utilizing the Green's function $\Gamma_1(x,y,a)$ associate with $d_1 \Delta$ and Neumann boundary condition, we have 
\begin{equation}
\begin{aligned}
U(t, x)= & e^{-c t} \int_{\Omega} \Gamma_1(x, y,t) U_0(x) d y+\int_0^t e^{-c s} \int_{\Omega} \Gamma_1(x, y,s)\bigg \{c U(t-s, y) \\
- & \frac{\beta U(t-s, y) V(t-s, y)}{1+h V(t-s,y)}+U(t-s, y) \Big [b-d U(t-s, y)\\
-& \frac{\beta}{\kappa} \int_0^{\tau} \int_0^\pi \Gamma(y, z, a) \frac{U(t-s-a, z) V\left(t-s-a, z\right)}{1+h V(t-s-a,z)} d z d a \Big ] \bigg\} d y d s\\
\end{aligned}\label{2023eq4.11}
\end{equation}

Let 
\begin{equation*}
    \begin{aligned}
        U^{\infty}(x) := \limsup_{t\rightarrow+\infty} U(t,x), &\qquad  U_{\infty}(x) := \liminf_{t\rightarrow+\infty} U(t,x),\\
        V^{\infty}(x) := \limsup_{t\rightarrow+\infty} V(t,x), & \qquad V_{\infty}(x) := \limsup_{t\rightarrow+\infty} V(t,x).
    \end{aligned}
\end{equation*}

As the solution  semigroup $\Phi(t)$ is uniformly persistent, for all $x\in\overline{\Omega}$, there exists a $\epsilon>0$ such that
\begin{equation*}
       U^{\infty}(x) \geq U_{\infty}(x)\geq \epsilon, \qquad  V^{\infty}(x)\geq V_{\infty}(x) \geq \epsilon.
\end{equation*}

By applying Fatou's Lemma to equation \eqref{2023eq4.11}, we obtain
\begin{equation}
\begin{aligned}
U^{\infty} (x)\leq & \int_0^{\infty} e^{-c s} \int_{\Omega} \Gamma_1(x, y,s)\bigg \{c U^{\infty}(y) 
- \frac{\beta U^{\infty}( y) V_{\infty}( y)}{1+h V_{\infty}(y)}\\
+ & U^{\infty}(y) \Big [b-d U^{\infty}(y)-\frac{\beta}{\kappa} \int_0^{\tau} \int_0^{\pi} \Gamma(y, z, a)  \frac{U^{\infty}(z) V_{\infty}(z)}{1+h V_{\infty}(z)} d z d a \Big ] \bigg\} d y d s\\
\end{aligned}\label{2023eq4.12}
\end{equation}

Let
\begin{equation*}
\begin{aligned}
\mathbb{U}^{\infty} := \sup_{x\in [0,\pi]} U^{\infty}(x),&\qquad \mathbb{V}^{\infty} := \sup_{x\in [0,\pi]} V^{\infty}(x),\\
\mathbb{U}_{\infty} := \inf_{x\in [0,\pi]} U_{\infty}(x),&\qquad \mathbb{V}_{\infty} := \inf_{x\in [0,\pi]} V_{\infty}(x).
\end{aligned}
\end{equation*}

Besides, due to the validity of  
$$\int_0^\pi \Gamma_1 (x, y, a) d y=1,\qquad \int_0^\pi \Gamma(x, y, a) d y=e^{-\alpha a},$$
the inequality \eqref{2023eq4.12} can be rewritten as 
\begin{equation}
\mathbb{U}^{\infty} \leq \frac{1}{c}\{c \mathbb{U}^{\infty}-\frac{\beta \mathbb{U}^{\infty} \mathbb{V}_{\infty}}{1+h \mathbb{V}_{\infty}}+\mathbb{U}^{\infty}\left[b-d \mathbb{U}^{\infty}-\frac{\beta}{\alpha \kappa}\left(1-e^{-\alpha \tau}\right) \frac{\mathbb{U}^{\infty} \mathbb{V}_{\infty}}{1+ h \mathbb{V}_{\infty}} \big ]\right\}.\label{2023eq4.13}
\end{equation}

The simplified form of \eqref{2023eq4.13} yields:
\begin{equation}
    \frac{\beta\mathbb{V}_{\infty}}{1+h \mathbb{V}_{\infty}}\leq 
    [1+(\frac{1-e^{-\alpha \tau }}{\alpha \kappa})\mathbb{U}^{\infty}]^{-1}  (b-d \mathbb{U}^{\infty}).\label{2023eq4.14}
\end{equation}

Similarly, by the Fatou's Lemma of the lower limit, we can obtain
\begin{equation}
\frac{\beta\mathbb{V}^{\infty}}{1+h \mathbb{V}^{\infty}}\geq [1+(\frac{1-e^{-\alpha \tau }}{\alpha \kappa})\mathbb{U}_{\infty}]^{-1} (b-d \mathbb{U}_{\infty}).\label{2023eq4.15}
\end{equation}

By employing the Green's function $\Gamma$ for $d_2\Delta -\alpha$, we obtain
\begin{equation*}
    V(t, x)=\int_{0}^\pi \Gamma(x, y, t) V_0(y) d y+\beta \int_0^t \int_0^\pi \Gamma(x, y, s+\tau) \frac{U(t-s-\tau, y) V(t-s-\tau, y) }{1 + h V(t-s-\tau,y)}d y d s.
\end{equation*}
Again, by Fatou's Lemma together with the fundamental properties of the Green's function $\Gamma$, one can obtain the following results:
\begin{equation}
       1 \leq \frac{\beta \mathbb{U}^\infty}{\alpha e^{\alpha \tau} (1+h \mathbb{V}^\infty)},\label{2023eq4.16} 
\end{equation}
and 
\begin{equation}
    1 \geq \frac{\beta \mathbb{U}_\infty}{\alpha e^{\alpha\tau} (1+h \mathbb{V}_\infty)}.\label{2023eq4.17} 
\end{equation}

Besides, by comparing theorem together with the first equation of system \eqref{2023eq4.1}, it is easy to obtain that
$\lim_{t\rightarrow\infty} u(t,x) \leq \frac{b}{d}$, which indicates that 
\begin{equation}
\mathbb{U}^{\infty} \leq \frac{b}{d}.\label{2023eq4.18}
\end{equation}

Taking inequality \eqref{2023eq4.18} into \eqref{2023eq4.16}, we can obtain 
\begin{equation}
    \mathbb{V}^\infty \leq \frac{1}{h}(\frac{\beta b}{\alpha d e^{\alpha\tau}}-1).\label{2023eq4.19}
\end{equation}
Again, Taking \eqref{2023eq4.19} into \eqref{2023eq4.15}, and yield
\begin{equation}
    \mathbb{U}_\infty \geq \frac{\alpha \kappa (b^2 h-\beta b+\alpha d e^{\alpha\tau})}{\alpha \kappa b d h+(1-e^{-\alpha\tau})(\beta b-\alpha d e^{\alpha\tau})}.\label{2023eq4.20}
\end{equation}
Lastly, taking \eqref{2023eq4.20} into \eqref{2023eq4.17}, and yield 
\begin{equation*}
       \mathbb{V}_\infty \geq \frac{(\beta b-\alpha d e^{\alpha\tau})(hb\kappa-\beta \kappa -e^{\alpha \tau}+1)}{h e^{\alpha\tau} \Big [\alpha \kappa b d h+(1-e^{-\alpha\tau})(\beta b-\alpha d e^{\alpha\tau})\Big ]}.   
\end{equation*}

By the definition of $\mathbb{U}_{\infty}$ and $\mathbb{V}_{\infty}$, We have completed the proof for this part.  
\end{proof}

\section{Conclusion}\label{2023section6}
In this paper, we derive a therapeutic model for oncolytic virus treatment based on an age-structured model with non-local time delays and non-local infection spreading. It is worth mentioning that to overcome the limitation of mathematical models in explaining uncertain biological phenomena using specific functional expressions, we use general continuous differentiable functions $\mathcal{F}$ and $\mathcal{G}$ to characterize tumor growth and virus infection.

Theorem \ref{2020c-T1} ensures the existence and uniqueness of the model's solution, as well as the existence of a global compact attractor. Additionally, the principle eigenvalue $\sigma_1$ defined by system \eqref{2020c-eq11} determines the sustained proliferation of tumor cells:

(1) Theorem \ref{2020c-T3.2} indicates that tumor growth is not sustainable when $\sigma_1 < 0$.
(2) Lemma \ref{2020c-L3.1} shows that tumor growth reaches a saturation state when $\sigma_1 > 0$.

Furthermore, in the case of sustained tumor growth (i.e., $\sigma_1 > 0$), the principle eigenvalue $\lambda_1$ defined by system \eqref{2020c-eq16} determines the success of oncolytic virus treatment:
(1) Theorem \ref{2020c-T3.3} reveals that the viral treatment fails when $\lambda_1 < 0$ (i.e., $\lim_{t\rightarrow+\infty}V(t,x) = 0$).
(2) Theorem \ref{2020c-T3.4} demonstrates that the viral treatment is successful when $\lambda_1 > 0$ (i.e., there exists $\epsilon>0$ such that $\lim_{t\rightarrow+\infty}V(t,x) >\epsilon$).

Then, we assume that the tumor follows logistic growth (with growth rate $\mathcal{F}=b-dU$) and Holling type II functional response (viral infection function $\mathcal{G}(U,V)=\frac{\beta U V}{1+h V}$) under Neumann boundary conditions. Model \eqref{2020c-eq8}-\eqref{2020c-maineq3} is transformed into model \eqref{2023eq4.1}-\eqref{2023eq4.3}. We calculate the tumor threshold parameter as $\sigma_1 = b$ and the viral treatment threshold parameter as $\lambda_1 = -\alpha+\frac{\beta b}{d e^{\alpha\tau}}$. Furthermore, we provide a lower bound estimate for the solution under tumor treatment conditions.

We believe that our model is highly versatile as it incorporates different tumor growth processes and viral infection processes. The dynamic results of this study provide theoretical foundations for further data fitting and prediction of the model.

    \bibliography{ref}
    \bibliographystyle{plain}
\end{document}